\newtheorem*{theoA}{Theorem A}
\newtheorem*{theoB}{Theorem B}
\newtheorem{theo}{Theorem}[section]
\newtheorem{lem}{\textrm{Lemma}}[section]
\newtheorem{prob}{Problem}[section]
\newtheorem{open problem}{Open problem}[section]
\newcommand{\pa}{\partial}
\newcommand{\ol}{\overline}
\newcommand{\be}{\begin{equation}}
\newcommand{\ee}{\end{equation}}
\newcommand{\bs}{\begin{small}}
\newcommand{\es}{\end{small}}
\newcommand{\beas}{\begin{eqnarray*}}
\newcommand{\eeas}{\end{eqnarray*}}
\newcommand{\bea}{\begin{eqnarray}}
\newcommand{\eea}{\end{eqnarray}}
\newcommand{\D}{\mathbb{D}}
\renewcommand{\epsilon}{\varepsilon}
\numberwithin{equation}{section}
\begin{document}
\title[Landau-type Theorems]{The Landau-type theorems for functions with logharmonic Laplacian and bounded length distortions}
\author[S. K. Guin and R. Mandal]{Sudip Kumar Guin and Rajib Mandal}
\date{}
\address{Sudip Kumar Guin, Department of Mathematics, Raiganj University, Raiganj, West Bengal-733134, India.}
\email{sudipguin20@gmail.com}
\address{Rajib Mandal, Department of Mathematics, Raiganj University, Raiganj, West Bengal-733134, India.}
\email{rajibmathresearch@gmail.com}
\maketitle
\let\thefootnote\relax
\footnotetext{2020 Mathematics Subject Classification: 30C99, 31A30, 31A05.}
\footnotetext{Key words and phrases: harmonic mappings, polyharmonic mappings, logharmonic mappings, Jacobian, coefficients estimates, univalent, Landau-type theorems.}
\begin{abstract}
In this study, we establish certain Landau-type theorems for functions with logharmonic Laplacian of the form $F(z)=|z|^2L(z)+K(z)$, $|z|<1$, where $L$ is logharmonic and $K$ is harmonic, with $L$ and $K$ having bounded length distortion in the unit disk $\D=\{z\in\mathbb{C}:|z|<1\}$. Furthermore, we examine the univalence of the mappings $D(F)$, where $D$ is a differential operator.
\end{abstract}
\section{Introduction and Preliminaries}
Let $\D_r=\{z\in\mathbb{C}:|z|<r\}$ be the open disk with center at the origin and radius $r>0$ and $\D:=\D_1$. Let $\mathcal{H}(\D)$ be the class of analytic functions defined on $\D$.
If the complex-valued function $f$ satisfies the Laplace's equation $\Delta f=0$, then $f$ is a harmonic function, where $\Delta $ stands for the Laplacian operator 
\beas \Delta :=4\frac{\pa^2}{\pa z \pa\bar{z}}=\frac{\pa^2}{\pa x^2}+\frac{\pa^2}{\pa y^2}.\eeas
The Jacobian of $f$ is defined as
\beas J_f=|f_z|^2-|f_{\ol{z}}|^2.\eeas
We denote
\beas \Lambda_f(z) &=&\max\limits_{0\leq t \leq 2\pi} \left|f_z(z)+e^{-2it}f_{\ol{z}}(z)\right|=|f_z(z)|+|f_{\ol{z}}(z)|\eeas
and
\beas \lambda_f(z) &=&\min\limits_{0\leq t \leq 2\pi} \left|f_z(z)+e^{-2it}f_{\ol{z}}(z)\right|=\left||f_z(z)|-|f_{\ol{z}}(z)|\right|.\eeas
Note that a harmonic function $f$ is locally univalent if $J_f\ne 0$ (see \cite{CS1984,D2004}). Whenever $\Lambda_f$ is bounded, then it is known that $f$ has bounded length distortion.\\[2mm]
\indent
In a domain $\Omega$, a $2p$-times $(p\geq 1)$ continuously differentiable complex-valued mapping $F$ is called polyharmonic if it satisfies the polyharmonic equation $\Delta^pF=\Delta(\Delta^{p-1}F)=0$. It is clear that the case $p=1$ (resp. $p=2$) reduces to harmonic (resp. biharmonic) mapping. When $\Omega$ is a simply connected domain, then $F$ is polyharmonic in $\Omega$ if, and only, if $F$ has the form 
\beas F(z)=\sum\limits_{k=1}^p|z|^{2(k-1)}G_k(z),\;\;z\in\Omega,\eeas
where $G_k(z)$ is a complex-valued harmonic mapping in $\Omega$ for $k\in\left\{1,2,\dots,p\right\}$ (see \cite{CPW2011,LL2014}).\\[2mm]
\indent The solution of the non-linear elliptic partial differential equation \beas\frac{\ol{f_{\ol{z}}}}{\ol{f}}=a\frac{f_z}{f} ,\eeas is known to be a logharmonic function, where $a\in\mathcal{H}(\D)$ is the second dilatation function with $|a(z)|<1$, $z\in\D$ (see \cite{AB1988,AH1987}). Let $f$ be a univalent logharmonic function with respect to $a$ with $a(0)=0$. If $f(0)=0$, then $f$ is expressed as
\bea\label{eq1.2}f(z)=h(z)\ol{g(z)},\eea where $h(z)=z+\sum_{n=2}^{\infty}a_nz^n $ and $g(z)=1+\sum_{n=1}^{\infty}b_nz^n$. If $0\notin f(\D)$,
then $\log{f(z)}$ is univalent and harmonic, where $f$ has the form (\ref{eq1.2}) with $h$ and $g$ are nonvanishing analytic functions in $\D$. For a detailed study of harmonic functions and logharmonic functions, we refer to \cite{A2009,AAA2012,AB1988,AH1987,BL2019,D2004,L2008,LL2014,LL2019,LL2021,WPL2024} and the references therein.\\[2mm]
\indent In this paper, we deal with the class of all continuous complex-valued functions $F=u+iv$ defined in a domain $\Omega\subseteq\mathbb{C}$ for which the Laplacian of $F$ is logharmonic. Clearly, $\log(\Delta F)$ is harmonic in $\Omega$ if it satisfies the equation $\Delta (\log{(\Delta F)})=0$. If $\Omega$ is any simply connected domain, we can write
\bea\label{eq1.3} F(z)=r^2L(z)+K(z),\quad z=re^{i\theta}\eea
with $L=h_1\ol{g_1}$ is logharmonic and $K=h_2+\ol{g_2}$ is harmonic in $\Omega$,
where $h_1,g_1,h_2$ and $g_2$ are analytic in $\Omega$.
Let $L_{Lh}(\D)$ denote the collection of all functions of the kind (\ref{eq1.3}) defined on $\D$ (see \cite{A2009}).\\[2mm]
\indent Now we recall the classical Landau theorem (see \cite{L1926}): if $f$ is a normalized analytic function in the open unit disk $\D$ with $|f(z)| < M $ for $z\in \D$, then
$f$ is univalent in the open disk $\D_{\rho_0}$ with $\rho_0=1/(M+\sqrt{M^2-1})$, and $f(\D_{\rho_0})$ contains an open disk $\D_{R_0}$ with $R_0=M\rho_0^2$. The result is sharp, with the extremal function $Mz(1-Mz)/(M-z)$.\\
Furthermore, The classical Bloch theorem states that: there exists a positive constant $b$ such that if $f$ is analytic on $\D$ with $f'(0)=1$, then the image $f(\D)$ contains a schlicht disk of radius $b$, {\it i.e.}, a disk of radius $b$, which is the univalent image of some region in $\D$. The Bloch constant is defined as the supremum of such constants $b$ (see \cite{CGH2000,GK2003}).\\[2mm]
\indent Due to Mocanu \cite{M1980}, we consider the differential operator $D$ defined by
\beas D=z\frac{\pa}{\pa z}-\ol{z}\frac{\pa}{\pa \ol{z}}\eeas
on the class of continuously differentiable complex-valued functions. It is easy to check that $D$ is a complex linear operator satisfying:
\beas D(af+bg)=aD(f)+bD(g)\;\;\text{and}\;\; D(fg)=fD(g)+gD(f),\eeas
where $a,b\in\mathbb{C}$ and $f,g$ are continuously differentiable complex-valued functions.\\[2mm]
\indent In 2012, Abdulhadi {\it et al.} \cite{AAA2012} established the following Landau-type theorem for functions with logharmonic Laplacian in $\D$.
\begin{theoA}\cite[Theorem 3]{AAA2012}\label{thA}
Let $F(z)=r^2L(z)+K(z)$ be in $L_{Lh}(\D)$, where $L(z)$ is logharmonic and $K(z)$ is harmonic in the unit disk $\D$ such that
$L(0)=K(0)=0, J_{F}(0)=1$ and $L(z)$, $K(z)$ are both bounded by $M$ in $\D$. Then $F$ is univalent in the disk $|z|<\varrho_1$, where $\varrho_1$ satisfies the equation
\beas \frac{\pi}{4M}-2\varrho M-2M\left(\frac{\varrho^3}{(1-\varrho^2)^2}+\frac{1}{(1-\varrho)^2}-1\right)=0,\eeas
and $F(\D_{\varrho_1})$ contains a disk $\D_{R_1}$ with
\beas R_1=\frac{\pi}{4M}\varrho_1-\varrho_1^2M\frac{1}{1-\varrho_1^2}-2M\frac{\varrho_1^2}{1-\varrho_1}.\eeas
This result is not sharp.
\end{theoA}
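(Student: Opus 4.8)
The plan is to use the standard two-step scheme for Landau-type theorems, with the quantity $\lambda_F(0)$, the minimal stretching of $F$ at the origin, playing the central role. Writing $K=h_2+\overline{g_2}$ with $h_2(z)=\sum_{n\ge1}c_nz^n$ and $g_2(z)=\sum_{n\ge1}d_nz^n$ (the constant terms removed by $K(0)=0$), one has $F_z(0)=c_1$ and $F_{\overline z}(0)=\overline{d_1}$, so the normalization reads $J_F(0)=|c_1|^2-|d_1|^2=1$. Since $F$ and $K$ share their first-order derivatives at $0$, we have $\Lambda_F(0)=\Lambda_K(0)=|c_1|+|d_1|$, and the sharp gradient bound for bounded harmonic mappings gives $\Lambda_K(0)\le 4M/\pi$. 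Consequently
\beas
\lambda_F(0)=|c_1|-|d_1|=\frac{|c_1|^2-|d_1|^2}{|c_1|+|d_1|}=\frac{1}{\Lambda_K(0)}\ge\frac{\pi}{4M},
\eeas
which is exactly the origin of the term $\pi/(4M)$ in both displayed quantities.

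Next I would collect the estimates that feed the nonlinear remainder. For $K$, the elementary Fourier-coefficient bound for a function bounded by $M$ yields $|c_n|\le M$ and $|d_n|\le M$, hence $|c_n|+|d_n|\le 2M$ for every $n\ge1$; this produces the harmonic tail contributions $2M\big((1-\varrho)^{-2}-1\big)=2M\sum_{n\ge2}n\varrho^{n-1}$ in the univalence equation and $2M\varrho_1^2/(1-\varrho_1)$ in $R_1$. For the logharmonic factor $L=h_1\overline{g_1}$ with $L(0)=0$ and $|L|\le M$, I would expand $L$ as a real-analytic series and use the corresponding coefficient bounds to obtain growth estimates for $|L(z)|$, $|L_z(z)|$ and $|L_{\overline z}(z)|$ on $\D_\varrho$; after multiplication by $|z|^2$ these yield the terms carrying the factors $1/(1-\varrho^2)$ and $\varrho^3/(1-\varrho^2)^2$.

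For univalence, fix distinct $z_1,z_2\in\D_\varrho$ and write
\beas
F(z_1)-F(z_2)=\int_{[z_2,z_1]}\big(F_z(z)\,dz+F_{\overline z}(z)\,d\overline z\big).
\eeas
Pulling out the constant derivatives bounds the linear part below by $\lambda_F(0)|z_1-z_2|\ge(\pi/4M)|z_1-z_2|$, while the remainder is at most $|z_1-z_2|\max_{|z|\le\varrho}\big(|F_z(z)-F_z(0)|+|F_{\overline z}(z)-F_{\overline z}(0)|\big)$. Using $F_z-F_z(0)=\overline z\,L+|z|^2L_z+(K_z-K_z(0))$ together with its conjugate analogue, the three groups of terms contribute $2\varrho M$ (from $\overline z\,L$ and $zL$), $2M\varrho^3/(1-\varrho^2)^2$ (from $|z|^2L_z$ and $|z|^2L_{\overline z}$), and $2M\big((1-\varrho)^{-2}-1\big)$ (from the harmonic tail). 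Requiring the linear lower bound to exceed the remainder gives $F(z_1)\ne F(z_2)$; since the left-hand side of the defining equation decreases from $\pi/(4M)>0$ at $\varrho=0$ to $-\infty$ as $\varrho\to1^-$, the smallest positive root $\varrho_1$ is well defined and is the asserted radius of univalence.

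For the covering property, note $F(0)=0$, and for $|z|=\varrho_1$ estimate
\beas
|F(z)|\ge\big|c_1z+\overline{d_1}\,\overline z\big|-|z|^2|L(z)|-\Big|\sum_{n\ge2}\big(c_nz^n+\overline{d_n}\,\overline z^{\,n}\big)\Big|\ge\frac{\pi}{4M}\varrho_1-\varrho_1^2M\frac{1}{1-\varrho_1^2}-2M\frac{\varrho_1^2}{1-\varrho_1}=R_1,
\eeas
where $\min_{|z|=\varrho_1}\big|c_1z+\overline{d_1}\,\overline z\big|=(|c_1|-|d_1|)\varrho_1\ge(\pi/4M)\varrho_1$. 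Since $F$ is univalent on $\D_{\varrho_1}$ and $|F(z)-F(0)|\ge R_1$ on its boundary circle, the image $F(\D_{\varrho_1})$ contains $\D_{R_1}$. The principal obstacle I anticipate is the logharmonic factor: because $L=h_1\overline{g_1}$ is not analytic, Schwarz's lemma does not apply directly, and the pointwise and derivative estimates for $L$ must be extracted from the bound $|h_1\overline{g_1}|\le M$ through the logharmonic structure in precisely the form that yields the factors $\varrho^3/(1-\varrho^2)^2$ and $\varrho_1^2/(1-\varrho_1^2)$; matching these closed forms, rather than settling for a looser majorant, is the delicate step, the harmonic part and the linear stretching $\lambda_F(0)$ being routine by comparison.
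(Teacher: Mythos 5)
First, a point of orientation: the statement you were asked to prove is Theorem~A, which this paper merely \emph{quotes} from Abdulhadi, Abu Muhanna and Ali \cite{AAA2012} as motivation; the paper contains no proof of it, so there is no in-paper argument to compare yours against. Judged on its own terms, your proposal has the correct skeleton --- the standard two-step Landau scheme --- and the two ``routine'' ingredients are handled correctly: since $F_z(0)=K_z(0)$ and $F_{\ol z}(0)=K_{\ol z}(0)$, the sharp bound $\Lambda_K(0)\le 4M/\pi$ for a harmonic mapping bounded by $M$ together with $J_F(0)=\lambda_F(0)\Lambda_F(0)=1$ does give $\lambda_F(0)\ge \pi/(4M)$, and the coefficient bounds $|c_n|+|d_n|\le 2M$ do produce the tail terms $2M\big((1-\varrho)^{-2}-1\big)$ and $2M\varrho_1^2/(1-\varrho_1)$.

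The genuine gap is exactly the step you defer to the end: the derivative estimates for the logharmonic factor $L=h_1\ol{g_1}$. You propose to obtain the term $2M\varrho^3/(1-\varrho^2)^2$ by bounding $\int_{[z_1,z_2]}|z|^2(L_z\,dz+L_{\ol z}\,d\ol z)$ through $\varrho^2\sup_{|z|\le\varrho}\big(|L_z(z)|+|L_{\ol z}(z)|\big)$, which would require $|L_z(z)|+|L_{\ol z}(z)|\le 2M|z|/(1-|z|^2)^2$. That inequality is \emph{false} under the stated hypotheses: take $L(z)=Mz$ (so $h_1(z)=Mz$, $g_1\equiv 1$, which is logharmonic with dilatation $a\equiv 0$, satisfies $L(0)=0$ and $|L|\le M$), for which $|L_z|+|L_{\ol z}|\equiv M$, whereas $2M|z|/(1-|z|^2)^2\to 0$ as $z\to 0$; since $\varrho_1$ is small for large $M$, the bound fails on essentially all of $\D_{\varrho_1}$. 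So the characteristic factors $\varrho^3/(1-\varrho^2)^2$ and $\varrho^2/(1-\varrho^2)$ cannot be produced by the pointwise sup-norm route you describe; they must come from a structurally different estimate (e.g.\ exploiting the bidegree expansion of $h_1\ol{g_1}$, the relation $g_1'/g_1=a\,h_1'/h_1$ forced by logharmonicity, or a different grouping of the terms of $F_z$ and $F_{\ol z}$). Since this is the only place where the logharmonic hypothesis actually enters, and you yourself flag it as ``the delicate step'' without carrying it out, the proposal does not constitute a proof of the stated result; everything that is actually verified in it is the part that would work for an arbitrary bounded $L$.
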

In \cite{WPL2024}, Wang {\it et al.} established the following Landau type theorem for the mappings of the form $D(F)$, where $F$ is a polyharmonic mapping with bounded length distortion.
\begin{theoB}\cite[Theorem 3.3]{WPL2024}\label{thB}
Suppose that $F$ is a polyharmonic mapping of the form
\beas F(z)=
a_0+\sum\limits_{k=1}^p|z|^{2(k-1)}\sum\limits_{k=1}^\infty(a_{n,k}z^n+\ol{b_{n,k}z^n}),\eeas
and all its non-zero coefficients $a_{n,k_1}, a_{n,k_2}$ and $b_{n,k_3},b_{n,k_4}$ satisfy the condition:
\beas \left|\arg{\frac{a_{n,k_1}}{a_{n,k_2}}}\right|\leq \frac{\pi}{2},\;\;\text{and}\;\;\left|\arg{\frac{b_{n,k_3}}{b_{n,k_4}}}\right|\leq \frac{\pi}{2}.\eeas
If $\Lambda_F(z)\leq M$ in $\D$ for some $M>1$ and $F(0)=J_F(0)-1=0$, then $D(F)$ is univalent in $\D_{\varrho_2}$, where $\varrho_2$ is the least positive root of the equation
\beas &&1-\sqrt{M^4-1}\left(\frac{2r-r^2}{(1-r)^2}+\sum\limits_{k=2}^p \frac{2r^{2k-1}}{\sqrt{10}(1-r)^3}\right.\\[2mm] &&
\left.+\sum\limits_{k=2}^p(2k-1)r^{2(k-1)}\left(\frac{1}{\sqrt{5}}+\frac{2r-r^2}{\sqrt{10}(1-r)^2}\right)\right)=0,\eeas
and $F(\D_{\varrho_2})$ contains a univalent disk $\D_{R_2}$ with
\beas R_2=\lambda_0(M)\varrho_2\left(1-\sqrt{M^4-1}\frac{\varrho_2}{1-\varrho_2}-\sqrt{M^4-1}\sum\limits_{k=2}^p\varrho_2^{2(k-1)}\left(\frac{1}{\sqrt{5}}+\frac{2\varrho_2-\varrho_2^2}{\sqrt{10}(1-\varrho_2)^2}\right)\right),\eeas
where
\beas \lambda_0(M)=\frac{\sqrt{2}}{\sqrt{M^2-1}+\sqrt{M^2+1}}.\eeas
In the case of $M=1$, $D(F)(z)=\alpha z$ with $|\alpha|=1$.
\end{theoB}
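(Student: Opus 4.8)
The plan is to reduce the theorem to explicit power-series data for $D(F)$ and then run two standard estimates: a two-point inequality for univalence and a minimum-modulus inequality for the covering disk, with the bounded length distortion hypothesis entering only through coefficient bounds. First I would compute $D(F)$ termwise. Writing $|z|^{2(k-1)}=z^{k-1}\ol z^{\,k-1}$, the operator $D=z\pa_z-\ol z\,\pa_{\ol z}$ sends $|z|^{2(k-1)}a_{n,k}z^n$ to $n|z|^{2(k-1)}a_{n,k}z^n$ and $|z|^{2(k-1)}\ol{b_{n,k}z^n}$ to $-n|z|^{2(k-1)}\ol{b_{n,k}z^n}$, so that
\[
D(F)(z)=\sum_{k=1}^p|z|^{2(k-1)}\sum_{n=1}^\infty n\bigl(a_{n,k}z^n-\ol{b_{n,k}z^n}\bigr).
\]
Thus $D(F)$ is again polyharmonic, $D(F)(0)=0$, and its linear part is $a_{1,1}z-\ol{b_{1,1}}\,\ol z$, giving $(D(F))_z(0)=a_{1,1}$ and $(D(F))_{\ol z}(0)=-\ol{b_{1,1}}$.

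Next I would record the normalizations. Since $F_z(0)=a_{1,1}$ and $F_{\ol z}(0)=\ol{b_{1,1}}$, the hypothesis $J_F(0)=1$ reads $|a_{1,1}|^2-|b_{1,1}|^2=1$, while $\Lambda_F\le M$ forces $|a_{1,1}|^2+|b_{1,1}|^2\le M^2$; minimizing $|a_{1,1}|-|b_{1,1}|$ under these two constraints yields $|a_{1,1}|-|b_{1,1}|\ge\lambda_0(M)$, which is the lower bound for the linear term. For the higher coefficients I would derive bounds from $\Lambda_F\le M$: because the hypothesis controls the full sum over the layers $k$, the argument conditions $|\arg(a_{n,k_1}/a_{n,k_2})|\le\pi/2$ (and the analogue for $b$) are precisely what allow one to pass from $\bigl|\sum_k(\cdots)\bigr|$ to $\sum_k|\cdots|$ at the cost of a factor $\sqrt2$, and a Cauchy--Schwarz estimate applied to the mixed monomials $z^{n+k-1}\ol z^{\,k-1}$ produces the constants $1/\sqrt5$ and $1/\sqrt{10}$. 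The resulting estimates carry the factor $\sqrt{M^4-1}$ (via $2|a_{1,1}||b_{1,1}|\le\sqrt{M^4-1}$) and turn out to be proportional to the linear dilatation $|a_{1,1}|-|b_{1,1}|$; this proportionality is the structural reason the defining equation for $\varrho_2$ is function-independent with leading constant $1$, while the same factor, bounded below by $\lambda_0(M)$, is retained in $R_2$.

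Then I would prove univalence by the two-point method: for distinct $z_1,z_2\in\D_\varrho$ I split $D(F)(z_1)-D(F)(z_2)$ into the linear difference $a_{1,1}(z_1-z_2)-\ol{b_{1,1}}(\ol z_1-\ol z_2)$ plus a remainder, bound the linear difference below by $(|a_{1,1}|-|b_{1,1}|)|z_1-z_2|$, and bound each remainder monomial above using $|z_1^m-z_2^m|\le m\varrho^{m-1}|z_1-z_2|$ together with the corresponding Lipschitz estimate for the factor $|z|^{2(k-1)}$. After dividing out the common dilatation factor, the remainder series resum to the closed forms $\frac{2r-r^2}{(1-r)^2}$, $\frac{2r^{2k-1}}{\sqrt{10}(1-r)^3}$ and $(2k-1)r^{2(k-1)}\bigl(\frac1{\sqrt5}+\frac{2r-r^2}{\sqrt{10}(1-r)^2}\bigr)$, so that requiring the linear part to dominate is exactly the displayed equation for $\varrho_2$; this separates $z_1$ and $z_2$. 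For the covering statement I would instead estimate $\min_{|z|=\varrho_2}|D(F)(z)|$ from below by the same splitting on the circle $|z|=\varrho_2$, which produces $R_2$; since $D(F)(0)=0$ and $D(F)$ is univalent on $\D_{\varrho_2}$, the image of $\D_{\varrho_2}$ then contains $\D_{R_2}$.

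The main obstacle is the coefficient estimation: extracting, from the single scalar bound $\Lambda_F\le M$, estimates on the doubly-indexed $a_{n,k},b_{n,k}$ that are both sharp enough to close the argument and proportional to $|a_{1,1}|-|b_{1,1}|$, so that the $\varrho_2$-equation becomes function-independent. The argument conditions and the Cauchy--Schwarz constants $1/\sqrt5,1/\sqrt{10}$ are indispensable here, and correctly resumming the two pieces of each layer's remainder (keeping the $|z|^{2(k-1)}$ factors organized) into the stated closed forms is the delicate bookkeeping. Finally, the borderline $M=1$ is degenerate: $|a_{1,1}|^2-|b_{1,1}|^2=1$ together with $|a_{1,1}|+|b_{1,1}|\le1$ force $b_{1,1}=0$ and $|a_{1,1}|=1$, and the saturation of all coefficient bounds kills every remaining coefficient, leaving $D(F)(z)=a_{1,1}z=\alpha z$ with $|\alpha|=1$.
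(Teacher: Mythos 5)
First, a point of orientation: the paper you are writing against does \emph{not} prove this statement. Theorem B is quoted verbatim from Wang, Ponnusamy and Liu \cite[Theorem 3.3]{WPL2024} and is used only as motivation for Problem 1.1; there is no proof of it anywhere in this manuscript to compare yours with. So your proposal can only be judged on its own merits.

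Your architecture is the standard and correct one for results of this type: compute $D(F)$ termwise (your formula $D(F)=\sum_k|z|^{2(k-1)}\sum_n n(a_{n,k}z^n-\ol{b_{n,k}z^n})$ is right), extract $\lambda_{D(F)}(0)=|a_{1,1}|-|b_{1,1}|\geq\lambda_0(M)$ and $2|a_{1,1}||b_{1,1}|\leq\sqrt{M^4-1}$ from $J_F(0)=1$ and $\Lambda_F\leq M$, then run a two-point estimate for univalence and a minimum-modulus estimate on $|z|=\varrho_2$ for the covering disk. The degenerate case $M=1$ is also handled correctly in outline. The genuine gap is that the entire quantitative content of the theorem sits in the coefficient estimates, and you do not derive them: you assert that Cauchy--Schwarz on the mixed monomials ``produces the constants $1/\sqrt{5}$ and $1/\sqrt{10}$'' and that the resulting bounds ``turn out to be proportional to $|a_{1,1}|-|b_{1,1}|$.'' That proportionality is not a bookkeeping convenience but the crux: without it the two-point lower bound reads $\lambda_F(0)-\sqrt{M^4-1}(\cdots)$ rather than $\lambda_F(0)\bigl(1-\sqrt{M^4-1}(\cdots)\bigr)$, and the defining equation for $\varrho_2$ would then carry a leading constant $\lambda_0(M)$ instead of $1$ (and would not be function-independent). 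You give no mechanism for why the bound on $|a_{n,k}|+|b_{n,k}|$ should scale with $\lambda_F(0)$; a Parseval argument of the kind used in Lemmas 2.4--2.5 of this paper yields bounds in terms of $M$ alone. Relatedly, to recover the closed form $\frac{2r-r^2}{(1-r)^2}=\sum_{n\geq 2}nr^{n-1}$ from a remainder that picks up one factor of $n$ from $D$ and another from $|z_1^n-z_2^n|\leq n\varrho^{n-1}|z_1-z_2|$, the coefficient bound must decay like $1/n$; your sketch never accounts for that factor, so as written the resummation would produce $\sum n^2r^{n-1}$, a different closed form. Until the coefficient lemma is stated and proved with its exact constants and its $n$-dependence, the displayed equation for $\varrho_2$ and the value of $R_2$ cannot be reached.
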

Now the results due to Abdulhadi {\it et al.} \cite{AAA2012} and Wang {\it et al.} \cite{WPL2024} motivate us to raise the following problem:
\begin{prob}\label{prob1}
Can we establish the Landau-type theorems for functions in the class $L_{Lh}(\D)$ with bounded length distortions?
\end{prob}
The objective of this paper is to find an affirmative answer to Problem \ref{prob1}.\\[2mm]
This paper is organized into the following sections. In section 2, we first establish the coefficient estimates for logharmonic mapping with bounded length distortions in the unit disk. In section 3, we present four certain Landau-type theorems for functions in the class $L_{Lh}(\D)$ with bounded length distortions.
\section{Key Lemmas}
The subsequent lemmas are pivotal in substantiating our principal findings.
\begin{lem}\cite{L2008}\label{l4}
Let $f(z)=h(z)+\ol{g(z)}$ be a harmonic mapping on $\D$ such that $h(z)=\sum_{n=1}^{\infty}a_nz^n,g(z)=\sum_{n=1}^{\infty}b_nz^n$ with $\lambda_f(0)=1$
and $\Lambda_{f}\leq \Lambda$. Then $\Lambda\geq 1$ and
\beas |a_n|+|b_n|\leq \frac{\Lambda^2 -1}{n \Lambda}\quad\text{for}\quad n\geq 2.\eeas
When $\Lambda >1$, the above estimate is sharp and the extremal functions $f_n(z)$ and $\ol{f_n(z)}$ are given by
\beas f_n(z)=\Lambda^2 z-(\Lambda^3-\Lambda)\int_0^z\frac{dz}{\Lambda +z^{n-1}}\quad\text{for}\quad n\geq 2.\eeas
When $\Lambda=1$, then $f(z)=a_1z+\ol{b_1z}$ with $||a_1|-|b_1||=1$.
\end{lem}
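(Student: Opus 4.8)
The plan is to reduce the estimate for the combined modulus $|a_n|+|b_n|$ to a single coefficient bound for a bounded analytic function. Writing $f=h+\ol{g}$, one has $f_z=h'$ and $f_{\ol z}=\ol{g'}$, so $\Lambda_f(z)=|h'(z)|+|g'(z)|$ and $\lambda_f(z)=\left||h'(z)|-|g'(z)|\right|$. First I would record that $\Lambda\geq\Lambda_f(0)=|a_1|+|b_1|\geq\left||a_1|-|b_1|\right|=\lambda_f(0)=1$, which already yields $\Lambda\geq1$.

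Next, for a fixed $n\geq2$, I would pick a unimodular constant $\omega$ that aligns the arguments of $a_n$ and $\omega b_n$, so that $|a_n+\omega b_n|=|a_n|+|b_n|$, and form the analytic function $\Phi(z)=h'(z)+\omega g'(z)$. The triangle inequality gives $|\Phi(z)|\leq|h'(z)|+|g'(z)|=\Lambda_f(z)\leq\Lambda$, so $\Phi$ maps $\D$ into the closed disk of radius $\Lambda$. Its constant coefficient is $\Phi(0)=a_1+\omega b_1$, whose modulus is at least $\left||a_1|-|b_1|\right|=1$, while its $(n-1)$-th Taylor coefficient is $n(a_n+\omega b_n)$, of modulus $n(|a_n|+|b_n|)$. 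Thus the whole statement follows once one has the classical sharp bound that a function $\Phi(z)=\sum_{k\geq0}c_kz^k$ with $|\Phi|\leq\Lambda$ on $\D$ obeys $|c_k|\leq(\Lambda^2-|c_0|^2)/\Lambda$ for every $k\geq1$: applying it at $k=n-1$ and using $|c_0|\geq1$ gives $n(|a_n|+|b_n|)\leq(\Lambda^2-1)/\Lambda$, which is the asserted inequality.

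The technical heart of the argument is this coefficient bound, and the hard part is obtaining it for an arbitrary index $k$ rather than just $k=1$. For $k=1$ I would normalize $\phi=\Phi/\Lambda$ and apply the Schwarz lemma to $\chi=(\phi-\phi(0))/(1-\ol{\phi(0)}\,\phi)$, which vanishes at the origin and is bounded by $1$, to get $|\phi'(0)|\leq1-|\phi(0)|^2$, equivalently $|c_1|\leq(\Lambda^2-|c_0|^2)/\Lambda$. To reach general $k$ I would use a roots-of-unity averaging: with $\zeta=e^{2\pi i/k}$ the average $\frac1k\sum_{j=0}^{k-1}\Phi(\zeta^jz)$ equals $c_0+c_kz^k+c_{2k}z^{2k}+\cdots$, is again bounded by $\Lambda$, and equals $\widetilde\Phi(z^k)$ for a function $\widetilde\Phi$ bounded by $\Lambda$ on $\D$; applying the $k=1$ case to $\widetilde\Phi$ recovers $|c_k|\leq(\Lambda^2-|c_0|^2)/\Lambda$. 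This averaging device is precisely what removes the obstacle.

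It remains to settle sharpness and the degenerate case. For $\Lambda>1$ I would verify that $f_n(z)=\Lambda^2z-(\Lambda^3-\Lambda)\int_0^z d\zeta/(\Lambda+\zeta^{n-1})$ has derivative $f_n'(z)=\Lambda(1+\Lambda z^{n-1})/(\Lambda+z^{n-1})$, a Möbius image of $z^{n-1}$ of modulus $\Lambda$ on $|z|=1$ and hence $\leq\Lambda$ on $\D$; so $\Lambda_{f_n}=|f_n'|\leq\Lambda$ and $\lambda_{f_n}(0)=|f_n'(0)|=1$, while the coefficient of $z^{n-1}$ in $f_n'$ is $\Lambda-1/\Lambda$, giving $|a_n|+|b_n|=(\Lambda^2-1)/(n\Lambda)$. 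For $\Lambda=1$, the chain $1=\lambda_f(0)\leq\Lambda_f(0)\leq1$ forces $|h'(0)|+|g'(0)|=1$, so the subharmonic function $|h'|+|g'|\leq1$ attains an interior maximum and is therefore constant $\equiv1$; the maximum principle applied to $|h'|$ and $|g'|$ then makes $h'$ and $g'$ constant, leaving $f(z)=a_1z+\ol{b_1z}$ with $\left||a_1|-|b_1|\right|=1$.
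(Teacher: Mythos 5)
The paper does not prove this lemma; it is imported verbatim from the cited reference \cite{L2008}, so there is no in-paper proof to compare against. Your argument is a correct, self-contained proof and follows essentially the classical route of that source: reduce to the analytic function $h'+\omega g'$ with $\omega$ chosen to align $a_n$ and $b_n$, invoke the sharp coefficient bound $|c_k|\le(\Lambda^2-|c_0|^2)/\Lambda$ for functions bounded by $\Lambda$ (your roots-of-unity averaging is exactly Wiener's trick), and check the extremal $f_n'(z)=\Lambda(1+\Lambda z^{n-1})/(\Lambda+z^{n-1})$; the $\Lambda=1$ rigidity via subharmonicity of $|h'|+|g'|$ is also sound.
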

\begin{lem}\cite{LL2019}\label{l5}
Let $H(z)$ be a harmonic mapping on $\D$ with $\lambda_H(0)=1$ and $\Lambda_H(z)<\Lambda\;(\Lambda > 1)$ for all $z\in \D$. Then for all $z_1,z_2\in\D_r\;(0<r<1,\;z_1\neq z_2)$, we have
\beas \left|\int_{[z_1,z_2]}H_z(z)dz+H_{\ol{z}}d\ol{z}\right|\geq \Lambda \frac{1-\Lambda r}{\Lambda-r}|z_2-z_1|,\eeas
where $[z_1,z_2]$ represents the line segment joining $z_1$ and $z_2$.
\end{lem}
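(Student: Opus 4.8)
The plan is to recognise the integrand as an exact differential and reduce the estimate to a chord bound for $H$. Since $H_z\,dz+H_{\ol z}\,d\ol z=dH$ along the segment, one has $\int_{[z_1,z_2]}(H_z\,dz+H_{\ol z}\,d\ol z)=H(z_2)-H(z_1)$. Parametrising $z(t)=z_1+t(z_2-z_1)$, $t\in[0,1]$, and setting $e^{-2i\theta}=\ol{(z_2-z_1)}/(z_2-z_1)$, I would factor out $z_2-z_1$ to obtain
\[
\left|\int_{[z_1,z_2]}(H_z\,dz+H_{\ol z}\,d\ol z)\right|=|z_2-z_1|\left|\int_0^1\bigl(H_z(z(t))+e^{-2i\theta}H_{\ol z}(z(t))\bigr)\,dt\right|,
\]
so everything reduces to a lower bound for the modulus of the average of $H_z+e^{-2i\theta}H_{\ol z}$ over the segment. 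Note that if $r\ge 1/\Lambda$ the asserted right-hand side is nonpositive and there is nothing to prove, so I may assume $r<1/\Lambda$.

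Next I would exploit the harmonic structure. Writing $H=h+\ol g$ with $h,g$ analytic gives $H_z=h'$ and $H_{\ol z}=\ol{g'}$, where $h'$ and $g'$ are analytic and satisfy the joint bound $|h'(z)|+|g'(z)|=\Lambda_H(z)<\Lambda$ on $\D$, while $\bigl||h'(0)|-|g'(0)|\bigr|=\lambda_H(0)=1$. I would split the integrand into its value at the origin plus a remainder, $H_z(z(t))+e^{-2i\theta}H_{\ol z}(z(t))=\bigl(h'(0)+e^{-2i\theta}\ol{g'(0)}\bigr)+\bigl((h'(z(t))-h'(0))+e^{-2i\theta}(\ol{g'(z(t))}-\ol{g'(0)})\bigr)$. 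By the definition of $\lambda_H$ as the minimum over all phases, the constant term satisfies $|h'(0)+e^{-2i\theta}\ol{g'(0)}|\ge\lambda_H(0)=1$; hence
\[
\left|\int_0^1\bigl(H_z+e^{-2i\theta}H_{\ol z}\bigr)\,dt\right|\ge 1-\int_0^1\bigl(|h'(z(t))-h'(0)|+|g'(z(t))-g'(0)|\bigr)\,dt .
\]
It therefore remains to bound the oscillation of $h'$ and $g'$ on $\D_r$.

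For the oscillation I would apply a Schwarz--Pick (distortion) estimate to the analytic maps $h'/\Lambda$ and $g'/\Lambda$ of $\D$ into $\D$. The decisive point, and the main obstacle, is that the two increments must be controlled jointly rather than separately: bounding $|h'-h'(0)|$ and $|g'-g'(0)|$ from the single conditions $|h'|<\Lambda$, $|g'|<\Lambda$ is too lossy, since the extremal configuration for the lemma has $g'\equiv 0$ and $h'$ a rotation of the extremal Blaschke-type factor. Using the joint constraint $|h'|+|g'|<\Lambda$ together with the normalisation $\lambda_H(0)=1$, the sharp distortion estimate should yield
\[
\int_0^1\bigl(|h'(z(t))-h'(0)|+|g'(z(t))-g'(0)|\bigr)\,dt\le\frac{(\Lambda^2-1)r}{\Lambda-r},
\]
which is exactly the distortion bound with initial modulus $1/\Lambda$. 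Substituting this into the previous display produces the stated constant, since $1-\dfrac{(\Lambda^2-1)r}{\Lambda-r}=\Lambda\dfrac{1-\Lambda r}{\Lambda-r}$, and multiplying by $|z_2-z_1|$ completes the argument. I expect the delicate part to be establishing this sharp joint oscillation bound while simultaneously controlling the cancellation hidden in the modulus of the integral; the centering at the origin is precisely what converts that cancellation into the manageable remainder above.
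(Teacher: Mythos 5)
First, a remark on the ground truth: the paper does not prove this lemma at all --- it is quoted from \cite{LL2019} --- so what follows compares your outline with the argument in that source, whose structure you have essentially reproduced.

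Your reduction is the correct one: factor out $z_2-z_1$, peel off the value of the integrand at the origin, bound that term below by $\lambda_H(0)=1$, and control the remainder by the oscillation of $h'$ and $g'$ along the segment; the closing algebra $1-\frac{(\Lambda^2-1)r}{\Lambda-r}=\Lambda\frac{1-\Lambda r}{\Lambda-r}$ is also right. The genuine gap is that the entire analytic content of the lemma sits in the joint oscillation estimate
\[
|h'(z)-h'(0)|+|g'(z)-g'(0)|\ \le\ \frac{(\Lambda^2-1)\,|z|}{\Lambda-|z|},
\]
which you assert (``the sharp distortion estimate should yield'') but do not prove --- and, as you correctly note, it does not follow from separate Schwarz--Pick bounds on $h'/\Lambda$ and $g'/\Lambda$. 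To close it: for arbitrary unimodular $e^{i\theta_1},e^{i\theta_2}$, the function $\psi=(e^{i\theta_1}h'+e^{i\theta_2}g')/\Lambda$ maps $\D$ into $\ol{\D}$ because $|e^{i\theta_1}h'+e^{i\theta_2}g'|\le|h'|+|g'|=\Lambda_H<\Lambda$, and $|\psi(0)|\ge\bigl||h'(0)|-|g'(0)|\bigr|/\Lambda=1/\Lambda$. Schwarz--Pick gives $|\psi(z)-\psi(0)|\le\frac{(1-|\psi(0)|^2)|z|}{1-|\psi(0)||z|}$; replacing $|\psi(0)|$ by $1/\Lambda$ and taking the supremum over $(\theta_1,\theta_2)$ (which turns the left side into $\bigl(|h'(z)-h'(0)|+|g'(z)-g'(0)|\bigr)/\Lambda$, since the bound is uniform in the phases) yields the display. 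One subtlety deserves emphasis: the function $t\mapsto\frac{(1-t^2)s}{1-ts}$ is \emph{not} globally decreasing in $t$ --- it increases up to $t=s/(1+\sqrt{1-s^2}\,)$ --- so the substitution $|\psi(0)|\mapsto 1/\Lambda$ is legitimate only because $s=|z|<r<1/\Lambda\le|\psi(0)|$ places you on the decreasing branch. That is exactly the reduction to $r<1/\Lambda$ you made at the outset; it is load-bearing and should be kept explicit. With this auxiliary estimate supplied, your argument is complete and coincides with the proof in the cited reference.
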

\begin{lem}\cite{LL2019}\label{l8}
Let $H(z)$ be a harmonic mapping on $\D$ with $\lambda_H(0)=1$ and $\Lambda_H(z)<\Lambda\;(\Lambda > 1)$ for all $z\in \D$.
Set $\gamma =H^{-1}(\ol{ow'})$ with $w'\in H(\pa \D_r)\;(0<r\le 1)$, where $\ol{ow'}$ denotes the closed line segment joining the origin and $w'$. Then
\beas \left|\int_{\gamma}H_\zeta (\zeta )d\zeta+H_{\ol{\zeta}}d\ol{\zeta}\right|\geq
\Lambda \int_0^{r}\frac{\frac{1}{\Lambda}-t}{1-\frac{t}{\Lambda}}dt.\eeas
\end{lem}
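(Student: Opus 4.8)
The plan is to reduce the asserted lower bound on $\left|\int_{\gamma}H_\zeta(\zeta)d\zeta+H_{\ol{\zeta}}d\ol{\zeta}\right|$ to a one-dimensional integral estimate by parametrising along the image segment $\ol{ow'}$ and exploiting the infinitesimal distortion of $H$. First I would note that the integrand is the total differential $dH=H_\zeta d\zeta+H_{\ol{\zeta}}d\ol{\zeta}$. Writing $w'=|w'|e^{i\theta_0}$, I parametrise $\ol{ow'}$ by $w(\rho)=\rho e^{i\theta_0}$ for $\rho\in[0,|w'|]$ and set $\zeta(\rho)=H^{-1}(w(\rho))$, so that $\gamma$ is traced from $\zeta(0)=0$ to a point $\zeta_0$ with $|\zeta_0|=r$. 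Since $\left|\frac{dw}{d\rho}\right|=1$, the quantity to be bounded below is simply $|w'|=\int_0^{|w'|}d\rho$, and the whole problem becomes a lower bound for $|w'|$.

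Next I would extract a pointwise inequality. Differentiating $w(\rho)=H(\zeta(\rho))$ gives $\frac{dw}{d\rho}=H_\zeta\,\zeta'(\rho)+H_{\ol{\zeta}}\,\ol{\zeta'(\rho)}$, and taking moduli yields $1=\left|\frac{dw}{d\rho}\right|\geq\lambda_H(\zeta(\rho))\,|\zeta'(\rho)|$. The decisive ingredient is the sharp infinitesimal distortion estimate
\beas \lambda_H(\zeta)\geq\Lambda\,\frac{\frac1\Lambda-|\zeta|}{1-\frac{|\zeta|}{\Lambda}}=:\phi(|\zeta|),\eeas
which is the differential form of Lemma \ref{l5} (obtained by letting $z_1,z_2$ coalesce) and is nonnegative precisely for $|\zeta|\leq 1/\Lambda$. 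Combining this with the elementary bound $\frac{d}{d\rho}|\zeta(\rho)|\leq|\zeta'(\rho)|$ and with $\lambda_H(\zeta)|\zeta'(\rho)|\leq1$, I obtain the pointwise relation $\phi(|\zeta|)\,\frac{d}{d\rho}|\zeta(\rho)|\leq1$.

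Finally I would integrate this pointwise inequality over $\rho\in[0,|w'|]$. With $\Phi(t)=\int_0^t\phi(s)\,ds$ one has $\phi(|\zeta|)\frac{d}{d\rho}|\zeta(\rho)|=\frac{d}{d\rho}\Phi(|\zeta(\rho)|)$, so the left member integrates to $\Phi(r)-\Phi(0)=\int_0^r\phi(t)\,dt$ by the fundamental theorem of calculus, regardless of whether $|\zeta(\rho)|$ is monotone, while the right member integrates to $|w'|$. Hence $\int_0^r\phi(t)\,dt\leq|w'|$, which is exactly the claim after recalling $\phi(t)=\Lambda\frac{1/\Lambda-t}{1-t/\Lambda}$ and $|w'|=\left|\int_{\gamma}H_\zeta(\zeta)d\zeta+H_{\ol{\zeta}}d\ol{\zeta}\right|$.

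The step I expect to be the main obstacle is justifying the geometric set-up rather than the analytic estimate: the pointwise inequality requires $\phi(|\zeta(\rho)|)\geq0$, that is, that $\gamma$ stays inside $\ol{\D_{1/\Lambda}}$, and this in turn needs $H$ to be univalent on the relevant region together with the fact that the segment $\ol{ow'}$ pulls back into $\D_r$. Both are consequences of the distortion theory behind Lemma \ref{l5}: univalence of $H$ on $\D_{1/\Lambda}$ (where the chordal lower bound is strictly positive) and starlikeness of $H(\D_r)$ about the origin for $r\leq1/\Lambda$. Once this admissibility is secured, the remaining computation reduces to the one-variable inequality above.
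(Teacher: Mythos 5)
The paper does not actually prove this lemma --- it is quoted from \cite{LL2019} as a known result --- so there is no in-paper argument to compare against; I can only assess your reconstruction on its own terms. What you propose is essentially the standard proof from the cited source: identify $\left|\int_{\gamma}H_\zeta\,d\zeta+H_{\ol{\zeta}}\,d\ol{\zeta}\right|$ with $|w'|$, bound $1=|dw/d\rho|\geq\lambda_H(\zeta(\rho))\,|\zeta'(\rho)|$ along the lifted segment, insert the pointwise Schwarz--Pick-type estimate $\lambda_H(\zeta)\geq\Lambda\,\tfrac{1-\Lambda|\zeta|}{\Lambda-|\zeta|}$ (which is indeed the infinitesimal form of Lemma \ref{l5}, and in \cite{LL2019} is actually the primitive fact from which Lemma \ref{l5} is derived, not the other way around --- either direction is logically fine), and integrate via $\Phi(t)=\int_0^t\phi(s)\,ds$. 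That computation is sound.

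Two points need repair. First, your closing claim that $\gamma$ stays inside $\ol{\D_{1/\Lambda}}$ is false whenever $1/\Lambda<r\leq 1$, which the statement permits: $\gamma$ terminates on $\pa\D_r$, so its endpoint has modulus $r>1/\Lambda$. The fix is not starlikeness but truncation: let $\rho^*$ be the first time $|\zeta(\rho)|=1/\Lambda$; your pointwise inequality is valid on $[0,\rho^*]$ and gives $\Phi(1/\Lambda)\leq\rho^*\leq|w'|$, while $\Phi(r)\leq\Phi(1/\Lambda)$ because $\phi<0$ past $1/\Lambda$, so the stated bound still follows. (When $r\leq 1/\Lambda$ your argument applies verbatim.) Second, the existence of the lift itself --- that $\zeta(\rho)=H^{-1}(w(\rho))$ is a well-defined curve joining a zero of $H$ (implicitly $H(0)=0$, as in the paper's application where $K(0)=F(0)=0$) to $\pa\D_r$, i.e., that the continuation of local inverses of $H$ along $\ol{ow'}$ starting from $w'$ neither branches nor escapes before reaching the origin --- is the genuinely topological step; you flag it but dispatch it with an appeal to ``the distortion theory behind Lemma \ref{l5},'' which is precisely where a careful reader would want the details ($J_H\neq 0$ on the region where $\lambda_H>0$, plus a connectedness/compactness argument). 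Neither issue is fatal, but both should be written out.
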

For the coefficient estimates of logharmonic mapping with bounded length distortions in the unit disk, we establish the following two lemmas.
\begin{lem}\label{l12}
Let $L(z)=h(z)\ol{g(z)}$ be logharmonic in $\D$ with $h(z)=z+\sum_{n=2}^\infty a_nz^n$ and $g(z)=1+\sum_{n=1}^\infty b_nz^n$ such that $\Lambda_{L_z}\leq \Lambda_1 $ for all $z\in \D$. Then for $n\geq 0$, we have
\beas \left|\sum_{j=0}^n (j+1)(j+2)a_{j+2}b_{n-j}\right|\leq \Lambda_1 \;\;\text{and}\;\; \left|\sum_{j=0}^n (j+1)(n-j+1)a_{j+1}b_{n-j+1}\right|\leq \Lambda_1.\eeas
\end{lem}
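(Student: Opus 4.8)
The plan is to reduce both estimates to the classical Cauchy coefficient bound for analytic functions. First I would compute $L_z$ and its second-order derivatives explicitly. Since $L=h\ol{g}$ with $h$ analytic and $\ol{g}$ anti-analytic, one has $L_z=h'(z)\ol{g(z)}$, and hence $(L_z)_z=h''(z)\ol{g(z)}$ while $(L_z)_{\ol z}=h'(z)\ol{g'(z)}$. Therefore
\[
\Lambda_{L_z}(z)=\left|(L_z)_z\right|+\left|(L_z)_{\ol z}\right|=|h''(z)|\,|g(z)|+|h'(z)|\,|g'(z)|.
\]
The hypothesis $\Lambda_{L_z}\le\Lambda_1$ then bounds the sum of two nonnegative quantities, so each summand is itself controlled: $|h''(z)|\,|g(z)|\le\Lambda_1$ and $|h'(z)|\,|g'(z)|\le\Lambda_1$ for every $z\in\D$.

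Next I would recognize the two target sums as Taylor coefficients of the analytic products $h''g$ and $h'g'$. Writing $h'(z)=\sum_{m\ge1}m a_m z^{m-1}$ (with $a_1=1$), $h''(z)=\sum_{m\ge2}m(m-1)a_m z^{m-2}$, $g(z)=\sum_{k\ge0}b_k z^k$ (with $b_0=1$) and $g'(z)=\sum_{k\ge1}k b_k z^{k-1}$, a direct Cauchy-product computation gives
\[
[z^n]\,h''(z)g(z)=\sum_{j=0}^n(j+1)(j+2)a_{j+2}b_{n-j},\qquad [z^n]\,h'(z)g'(z)=\sum_{j=0}^n(j+1)(n-j+1)a_{j+1}b_{n-j+1},
\]
which are exactly the expressions to be estimated.

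Finally I would apply the coefficient integral. For any function $\phi$ analytic on $\D$ with Taylor coefficients $c_n$, on the circle $|z|=r<1$ one has $c_n r^n=\frac{1}{2\pi}\int_0^{2\pi}\phi(re^{i\theta})e^{-in\theta}\,d\theta$, so that $|c_n|\,r^n\le\frac{1}{2\pi}\int_0^{2\pi}|\phi(re^{i\theta})|\,d\theta$. Taking $\phi=h''g$ and using the pointwise bound $|h''|\,|g|\le\Lambda_1$ yields $|c_n|\,r^n\le\Lambda_1$; letting $r\to1^-$ gives the first inequality. The identical argument with $\phi=h'g'$ and the bound $|h'|\,|g'|\le\Lambda_1$ gives the second. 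The only genuinely substantive step is the separation of $\Lambda_{L_z}$ into the two products $|h''|\,|g|$ and $|h'|\,|g'|$, together with the observation that these are precisely the moduli of the integrands whose Taylor coefficients we want; once this is in place the estimate is an immediate consequence of the mean-value form of Cauchy's coefficient formula, and no sharpness or extremal analysis is needed here.
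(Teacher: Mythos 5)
Your proof is correct. You and the paper start identically: both compute $(L_z)_z=h''\ol{g}$ and $(L_z)_{\ol z}=h'\ol{g'}$ and both recognize the two target sums as the Taylor coefficients of the analytic products $h''g$ and $h'g'$. Where you diverge is in how the hypothesis $\Lambda_{L_z}\le\Lambda_1$ is converted into coefficient bounds. You first split the hypothesis termwise, getting the pointwise bounds $|h''g|\le\Lambda_1$ and $|h'g'|\le\Lambda_1$ separately, and then apply the $L^1$ mean-value form of Cauchy's coefficient formula to each product. The paper instead keeps the two terms together, uses $|L_{zz}|^2+|L_{\ol z z}|^2\le(|L_{zz}|+|L_{\ol z z}|)^2\le\Lambda_1^2$, and invokes Parseval's identity on the circle $|z|=r$ before letting $r\to1^-$. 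The paper's $L^2$ route actually yields the slightly stronger conclusion
\[
\sum_{n=0}^\infty\Bigl(\Bigl|\sum_{j=0}^n(j+1)(j+2)a_{j+2}b_{n-j}\Bigr|^2+\Bigl|\sum_{j=0}^n(j+1)(n-j+1)a_{j+1}b_{n-j+1}\Bigr|^2\Bigr)\le\Lambda_1^2,
\]
from which the stated inequalities follow term by term; your argument gives exactly the stated inequalities and nothing more, but is marginally more elementary since it needs only the trivial estimate $|c_n|r^n\le\frac{1}{2\pi}\int_0^{2\pi}|\phi(re^{i\theta})|\,d\theta$ rather than Parseval. Both arguments are complete and correct; nothing is missing from yours.
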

\begin{proof}
The product of two power series, $r(z)=\sum_{n=0}^\infty c_nz^n$ and $s(z)=\sum_{n=0}^\infty d_nz^n$, is given by
\beas t(z)=\left(\sum_{n=0}^\infty c_nz^n\right)\left( \sum_{n=0}^\infty d_nz^n\right)= \sum_{n=0}^\infty \left(\sum_{j=0}^n c_jd_{n-j}\right)z^n.\eeas
Set $a_1=b_0=1$. For $L(z)=h(z)\ol{g(z)}$, we have
\beas L_{zz}(z)&=& h''(z)\ol{g(z)}=\left(\sum_{n=2}^\infty n(n-1)a_nz^{n-2}\right)\left(\ol{\sum_{n=0}^\infty b_nz^n}\right)\\[2mm]
&=& \left(\sum_{n=0}^\infty (n+1)(n+2)a_{n+2}z^{n}\right)\left(\ol{\sum_{n=0}^\infty b_nz^n}\right) \eeas
\beas &= & \sum_{n=0}^\infty \sum_{j=0}^n (j+1)(j+2)a_{j+2}\ol{b_{n-j}}z^{j}\ol{z}^{n-j}\quad
\text{and}\eeas
\beas L_{\ol{z}z}(z)&=& h'(z)\ol{g'(z)}=\left(\sum_{n=1}^\infty na_nz^{n-1}\right)\left(\ol{\sum_{n=1}^\infty nb_nz^{n-1}}\right)\\[2mm]
&=& \left(\sum_{n=0}^\infty (n+1)a_{n+1}z^{n}\right)\left(\ol{\sum_{n=0}^\infty (n+1)b_{n+1} z^{n}}\right)\\[2mm]
&=& \sum_{n=0}^\infty \sum_{j=0}^n (j+1)(n-j+1)a_{j+1}\ol{b_{n-j+1}} z^{j}\ol{z}^{n-j}.
\eeas
Then Parseval's identity follows that
\bea\label{p1} \frac{1}{2\pi}\int_0^{2\pi}|L_{zz}(re^{i\theta})|^2d\theta &=& \sum_{n=0}^\infty \left|\sum_{j=0}^n (j+1)(j+2)a_{j+2}b_{n-j}\right|^2r^{2n}\eea
and
\bea\label{p2}\qquad \frac{1}{2\pi}\int_0^{2\pi}|L_{\ol{z}z}(re^{i\theta})|^2d\theta &=&  \sum_{n=0}^\infty \left|\sum_{j=0}^n (j+1)(n-j+1)a_{j+1}b_{n-j+1}\right|^2r^{2n}.\eea
Also, since $\Lambda_{L_z}\leq \Lambda_1$ for all $z\in\D$, we have
\bea\label{p3} \frac{1}{2\pi}\int_0^{2\pi}\left(|L_{zz}(re^{i\theta})|^2+|L_{\ol{z}z}(re^{i\theta})|^2\right)d\theta
&\leq &\frac{1}{2\pi}\int_0^{2\pi}\left(|L_{zz}(re^{i\theta})|+|L_{\ol{z}z}(re^{i\theta})|\right)^2d\theta\nonumber\\ &\leq &{\Lambda_1}^2.\eea
Hence, using (\ref{p1}) and (\ref{p2}) in (\ref{p3}), we have
\beas \sum_{n=0}^\infty \left|\sum_{j=0}^n (j+1)(j+2)a_{j+2}b_{n-j}\right|^2r^{2n}+\sum_{n=0}^\infty \left|\sum_{j=0}^n (j+1)(n-j+1)a_{j+1}b_{n-j+1}\right|^2r^{2n}\leq {\Lambda_1}^2.\eeas
Note that $r\to 1^{-}$ in the above inequality yields
\beas \sum_{n=0}^\infty \left|\sum_{j=0}^n (j+1)(j+2)a_{j+2}b_{n-j}\right|^2+\sum_{n=0}^\infty \left|\sum_{j=0}^n (j+1)(n-j+1)a_{j+1}b_{n-j+1}\right|^2\leq {\Lambda_1}^2.\eeas
Therefore for $n=0,1,2,\dots$, we have
\beas \left|\sum_{j=0}^n (j+1)(j+2)a_{j+2}b_{n-j}\right|\leq \Lambda_1 \quad \text{and}\quad \left|\sum_{j=0}^n (j+1)(n-j+1)a_{j+1}b_{n-j+1}\right|\leq \Lambda_1.\eeas
This completes the proof.
\end{proof}
\begin{lem}\label{l13}
Let $L(z)=h(z)\ol{g(z)}$ be logharmonic in $\D$ with $h(z)=z+\sum_{n=2}^\infty a_nz^n$ and $g(z)=1+\sum_{n=1}^\infty b_nz^n$ such that $\Lambda_{L_{\ol{z}}}\leq \Lambda_2 $ for all $z\in \D$. Then for $n\geq 0$, we have
\beas \left|\sum_{j=0}^n (n-j+1)(n-j+2)a_jb_{n-j+2}\right|\leq \Lambda_2 ,\;\left|\sum_{j=0}^n (j+1)(n-j+1)a_{j+1}b_{n-j+1}\right|\leq \Lambda_2.\eeas
\end{lem}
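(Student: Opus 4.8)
The plan is to follow the template of the proof of Lemma \ref{l12}, but to feed it with the hypothesis $\Lambda_{L_{\ol{z}}}\leq\Lambda_2$ in place of $\Lambda_{L_z}\leq\Lambda_1$. First I would record that, since $\Lambda_f=|f_z|+|f_{\ol z}|$ for any $f$, the standing bound reads
\[
\Lambda_{L_{\ol z}}=\left|(L_{\ol z})_z\right|+\left|(L_{\ol z})_{\ol z}\right|=\left|L_{z\ol z}\right|+\left|L_{\ol z\ol z}\right|\leq\Lambda_2 .
\]
Hence the two second-order derivatives carrying the relevant coefficients are $L_{z\ol z}$ and $L_{\ol z\ol z}$, and the task is to expand each as a double power series in $z$ and $\ol z$.

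Next I would compute, with the conventions $a_0=0$ and $b_0=1$, that $L_{\ol z\ol z}=h(z)\,\ol{g''(z)}$ and $L_{z\ol z}=h'(z)\,\ol{g'(z)}$. Writing $h(z)=\sum_{n\geq 0}a_nz^n$, $g''(z)=\sum_{m\geq 0}(m+1)(m+2)b_{m+2}z^m$ and $g'(z)=\sum_{m\geq 0}(m+1)b_{m+1}z^m$ and grouping by total degree $N$ (the power of $z$ plus the power of $\ol z$), I obtain
\[
L_{\ol z\ol z}=\sum_{N=0}^\infty\sum_{j=0}^{N}(N-j+1)(N-j+2)\,a_j\,\ol{b_{N-j+2}}\;z^{j}\ol{z}^{N-j},
\]
together with the companion expansion of $L_{z\ol z}$ whose coefficient is $(j+1)(N-j+1)a_{j+1}\ol{b_{N-j+1}}$. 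Since $L_{z\ol z}=L_{\ol z z}$, this companion expansion is precisely the one already met in the proof of Lemma \ref{l12}; this is why the second estimate is common to both lemmas.

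Then I would apply to each product—an analytic factor times a conjugate-analytic factor—the same total-degree Parseval identity underlying (\ref{p1})--(\ref{p2}), which yields
\[
\frac{1}{2\pi}\int_0^{2\pi}\left|L_{\ol z\ol z}(re^{i\theta})\right|^2 d\theta=\sum_{N=0}^\infty\left|\sum_{j=0}^{N}(N-j+1)(N-j+2)a_jb_{N-j+2}\right|^2 r^{2N},
\]
and the corresponding identity for $L_{z\ol z}$. Adding the two, invoking $\left|L_{z\ol z}\right|^2+\left|L_{\ol z\ol z}\right|^2\leq\left(\left|L_{z\ol z}\right|+\left|L_{\ol z\ol z}\right|\right)^2\leq\Lambda_2^2$ exactly as in (\ref{p3}), and integrating, I would arrive at a single inequality bounding a sum of two convergent series in $r^{2N}$ by $\Lambda_2^2$. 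Letting $r\to 1^-$ and then retaining a single nonnegative summand at a time gives the two asserted estimates.

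The routine but error-prone point, which I regard as the only thing requiring real care, is the bookkeeping of the index shifts produced by the second $\ol z$-derivative: differentiating $\ol{g(z)}$ twice lowers the $g$-index by two, so the surviving factor is $(N-j+1)(N-j+2)b_{N-j+2}$ and not the factor appearing in Lemma \ref{l12}. Apart from that, no new inequality is needed—the validity of the total-degree Parseval identity for a product $A(z)\ol{B(z)}$ is already the engine behind (\ref{p1})--(\ref{p2})—and the argument closes just as there.
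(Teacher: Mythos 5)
Your proposal is correct and follows essentially the same route as the paper: compute $L_{\ol z\ol z}=h\,\ol{g''}$ and $L_{z\ol z}=h'\,\ol{g'}$, expand by total degree, apply the same Parseval-type identity for products of an analytic and a conjugate-analytic factor as in (\ref{p1})--(\ref{p2}), bound the sum by $\Lambda_2^2$ via $\Lambda_{L_{\ol z}}=|L_{z\ol z}|+|L_{\ol z\ol z}|$, and let $r\to 1^-$. The paper's own proof is exactly this, stated by reference to Lemma \ref{l12}, and your index bookkeeping (including the observation that the $L_{z\ol z}$ expansion is the one already appearing there) matches.
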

\begin{proof}
Let $a_0=0$ and $a_1=1$. Using arguments similar to those used in the proof of \textrm{Lemma} $\ref{l12}$, we have
\beas L_{\bar{z}\bar{z}}(z)&=& h(z)\ol{g''(z)}=\left(z+\sum_{n=2}^\infty a_nz^n\right)\left(\ol{\sum_{n=2}^\infty n(n-1)b_nz^{n-2}}\right)\\[2mm]
&=& \left(\sum_{n=0}^\infty a_nz^n\right)\left(\ol{\sum_{n=0}^\infty (n+1)(n+2)b_{n+2}z^{n}}\right)\\[2mm]
&=& \sum_{n=0}^\infty \sum_{j=0}^n  (n-j+1)(n-j+2)a_j\ol{b_{n-j+2}}z^j\ol{z}^{n-j}\quad
\text{and}\\
 L_{z\ol{z}}(z)&=& h'(z)\ol{g'(z)}=\left(1+\sum_{n=2}^\infty na_nz^{n-1}\right)\left(\ol{\sum_{n=1}^\infty nb_nz^{n-1}}\right)\\[2mm]
&=& \left(\sum_{n=0}^\infty (n+1)a_{n+1}z^{n}\right)\left(\ol{\sum_{n=0}^\infty (n+1)b_{n+1} z^{n}}\right)\\[2mm]
&=& \sum_{n=0}^\infty \sum_{j=0}^n (j+1)(n-j+1)a_{j+1}\ol{b_{n-j+1}} z^{j}\ol{z}^{n-j}.
\eeas
Thus, for $n\geq 0$, we have
\beas \left|\sum_{j=0}^n (n-j+1)(n-j+2)a_jb_{n-j+2}\right|\leq \Lambda_2,\; \left|\sum_{j=0}^n (j+1)(n-j+1)a_{j+1}b_{n-j+1}\right|\leq \Lambda_2.\eeas
This completes the proof.
\end{proof}
\section{The Landau-type theorems}
In the following two results, we establish the sharp Landau-type theorems for functions in the class $L_{Lh}(\D)$ with bounded length distortions.
\begin{theo}\label{th2}
Let $F(z)=r^2L(z)+K(z)$ be in $L_{Lh}(\D)$, where $L(z)$ is logharmonic and $K(z)$ is harmonic in $\D$ such that $F(0)=\lambda_F(0)-1=0$
with $\Lambda_K < \Lambda_1 \;(\Lambda_1>1)$ and $\Lambda_L \leq \Lambda_2 $. Then there exists a constant $0 < r_1 < 1 $ such that $F$ is univalent in the disk $|z|<r_1$. Specifically, $r_1$ satisfies
\beas \Lambda_1\frac{1-\Lambda_1 r}{\Lambda_1-r}-3r^2\Lambda_2=0,\eeas
and $F(\D_{r_1})$ contains a schlicht disk $\D_{\rho_1}$, with
\beas \rho_1=\Lambda_1^2 r_1+(\Lambda_1^3-\Lambda_1)\ln\left(1-\frac{r_1}{\Lambda_1}\right)-r_1^3\Lambda_2.\eeas
Moreover, these estimates are sharp, with an extremal function given by
\bea\label{ex1} F_0(z)&=&\Lambda_1\int_{[0,z]}\frac{\frac{1}{\Lambda_1}- z}{1-\frac{z}{\Lambda_1}}dz-\Lambda_2 z^2\ol{z}\nonumber\\[2mm]
&=& \Lambda_1^2z+(\Lambda_1^3-\Lambda_1)\ln \left(1-\frac{z}{\Lambda_1}\right)-\Lambda_2 z^2\ol{z}, \;\;z\in\D.\eea
\end{theo}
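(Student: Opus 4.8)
The plan is to write $F=P+K$ with $P(z)=|z|^2L(z)=z\ol z\,h(z)\ol{g(z)}$ and to treat $K$ as the dominant harmonic term and $P$ as a perturbation. Since $P_z(0)=P_{\ol z}(0)=0$, one has $F_z(0)=K_z(0)$ and $F_{\ol z}(0)=K_{\ol z}(0)$, so $F(0)=0$ and $\lambda_F(0)=1$ translate into $K(0)=0$ and $\lambda_K(0)=1$; together with $\Lambda_K<\Lambda_1$ this places $K$ under the hypotheses of Lemma \ref{l5} and Lemma \ref{l8}. The entire argument hinges on one estimate for $P$: integrating $\Lambda_L\le\Lambda_2$ along the radial segment from $0$ to $z$ and using $L(0)=0$ gives $|L(z)|\le\Lambda_2|z|$.

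For univalence I would fix distinct $z_1,z_2\in\D_r$ and write $F(z_2)-F(z_1)=\int_{[z_1,z_2]}(K_z\,dz+K_{\ol z}\,d\ol z)+\int_{[z_1,z_2]}(P_z\,dz+P_{\ol z}\,d\ol z)$. Lemma \ref{l5} bounds the first integral below by $\Lambda_1\frac{1-\Lambda_1 r}{\Lambda_1-r}|z_2-z_1|$. For the second, computing $P_z=\ol z\,\ol g\,(h+zh')$ and $P_{\ol z}=zh\,\ol{(g+zg')}$ yields $|P_z|+|P_{\ol z}|\le r\big(2|h||g|+r(|h'||g|+|h||g'|)\big)=r(2|L|+r\Lambda_L)\le 3\Lambda_2 r^2$, where the last step uses $|L|\le\Lambda_2 r$ and $\Lambda_L\le\Lambda_2$. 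Subtracting, $|F(z_2)-F(z_1)|\ge\big(\Lambda_1\frac{1-\Lambda_1 r}{\Lambda_1-r}-3\Lambda_2 r^2\big)|z_2-z_1|$, which is positive exactly when $r<r_1$; hence $F$ is univalent in $\D_{r_1}$.

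For the covering statement I would apply the minimum-modulus principle on $|z|=r_1$. There $|F(z)|\ge|K(z)|-|P(z)|$, and $|P(z)|=|z|^2|L(z)|\le\Lambda_2 r_1^3$. Taking $w'$ to be the point of $K(\pa\D_{r_1})$ nearest the origin, Lemma \ref{l8} gives $\min_{|z|=r_1}|K(z)|=|w'|\ge\Lambda_1\int_0^{r_1}\frac{1/\Lambda_1-t}{1-t/\Lambda_1}\,dt$, and evaluating this integral produces exactly $\Lambda_1^2 r_1+(\Lambda_1^3-\Lambda_1)\ln(1-r_1/\Lambda_1)$. Hence $\min_{|z|=r_1}|F(z)|\ge\rho_1$; since $F$ is univalent on $\D_{r_1}$ with $F(0)=0$, its image is a Jordan domain whose boundary avoids $\D_{\rho_1}$, so $\D_{\rho_1}\subseteq F(\D_{r_1})$.

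For sharpness I would check that $F_0$ attains equality throughout. Its harmonic part $K_0(z)=\Lambda_1\int_0^z\frac{1/\Lambda_1-\zeta}{1-\zeta/\Lambda_1}\,d\zeta$ is analytic with $\lambda_{K_0}(0)=K_0'(0)=1$ and $\Lambda_{K_0}(z)=\Lambda_1\left|\frac{1/\Lambda_1-z}{1-z/\Lambda_1}\right|\le\Lambda_1$, so it is extremal for Lemmas \ref{l5} and \ref{l8}, while its logharmonic factor $L_0(z)=-\Lambda_2 z$ realizes $\Lambda_{L_0}\equiv\Lambda_2$. Restricting to the real axis, $F_0(x)=\Lambda_1^2 x+(\Lambda_1^3-\Lambda_1)\ln(1-x/\Lambda_1)-\Lambda_2 x^3$ has derivative $\frac{\Lambda_1(1-\Lambda_1 x)}{\Lambda_1-x}-3\Lambda_2 x^2$, which vanishes at $x=r_1$ by the very equation defining $r_1$; thus $F_0$ is not univalent on any larger disk, and $F_0(r_1)=\rho_1$ shows the covering radius is best possible. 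I expect the main obstacle to be the perturbation bound $|P_z|+|P_{\ol z}|\le 3\Lambda_2 r^2$: it must produce precisely the constant $3$ that appears in the defining equation of $r_1$, which forces the two estimates $|L|\le\Lambda_2 r$ and $\Lambda_L\le\Lambda_2$ to be combined exactly as above. The closed-form evaluation of the Lemma \ref{l8} integral and the real-axis derivative check are then routine.
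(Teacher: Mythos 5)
Your proposal is correct and follows essentially the same route as the paper: split $F$ into the harmonic part $K$ (handled by Lemmas \ref{l5} and \ref{l8}) and the perturbation $|z|^2L$ (bounded via $\Lambda_L\le\Lambda_2$ and $|L(z)|\le\Lambda_2|z|$ to get the same $3\Lambda_2 r^2$ term), with the identical extremal function and real-axis monotonicity argument for sharpness. The only cosmetic difference is that you bound $|P_z|+|P_{\ol z}|$ by the product rule directly, whereas the paper groups the same terms as $|z|^2(L_z\,dz+L_{\ol z}\,d\ol z)+L(z)(\ol z\,dz+z\,d\ol z)$; the resulting estimates coincide.
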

\begin{proof}
Let $F(z)=r^2L(z)+K(z)$. Then
\beas F_z(z)&=&\ol{z}L(z)+|z|^2L_z(z)+K_z(z)\eeas
and
\beas F_{\ol{z}}(z)&=& zL(z)+|z|^2L_{\ol{z}}(z)+K_{\ol{z}}(z).\eeas
To prove the univalence of $F$ in $\D_{r_1}$, let us assume $z_1,z_2\in\D_r$ with $z_1\neq z_2$ and $r\in (0,1)$. Then for $[z_1,z_2]$, the line segment joining $z_1$ to $z_2$, we have
\bea\label{i4} &&\left|F(z_2)-F(z_1)\right| \nonumber\\[2mm]
&=& \left|\int_{[z_1,z_2]}F_z(z)dz+F_{\ol{z}}(z)d\ol{z}\right| \nonumber\\[2mm]
&=&\left|\int_{[z_1,z_2]}(\ol{z}L(z)+|z|^2L_z(z)+K_z(z))dz+(zL(z)+|z|^2L_{\ol{z}}(z)+K_{\ol{z}}(z))d\ol{z}\right|\nonumber\\[2mm]
&\geq & I_1-I_2-I_3,\eea
where
\beas &&I_1=\left|\int_{[z_1,z_2]} K_z(z)dz+K_{\ol{z}}(z)d{\ol{z}}\right|,\;
I_2=\left|\int_{[z_1,z_2]} |z|^2(L_z(z)dz+L_{\ol{z}}(z)d\ol{z})\right|\\[2mm]
 \text{and}&&I_3= \left|\int_{[z_1,z_2]} L(z)(\ol{z}dz+zd\ol{z})\right|.\eeas
Since $\lambda_F(0)=\lambda_K(0)=1$, and in view of \textrm{Lemma} $\ref{l5}$, we have
\bea\label{a1} I_1=\left|\int_{[z_1,z_2]} K_z(z)dz+K_{\ol{z}}(z)d{\ol{z}}\right|\geq \Lambda_1\frac{1-\Lambda_1 r}{\Lambda_1-r}|z_2-z_1|.\eea
Further, we have
\bea\label{F01} I_2=\left|\int_{[z_1,z_2]} |z|^2(L_z(z)dz+L_{\ol{z}}(z)d\ol{z})\right|\leq r^2\Lambda_2|z_2-z_1|.\eea
For $z\in\D$, we have
\bea\label{eq4.3.1} |L(z)|=\left|\int_{[0,z]}{L_z(z)dz+L_{\ol{z}}(z)d\ol{z}}\right|\leq \int_{[0,z]}{|L_z(z)
+L_{\ol{z}}(z)|}|dz|\leq \Lambda_2|z|.\eea
Using (\ref{eq4.3.1}), we have
\bea\label{F02} I_3 &=& \left|\int_{[z_1,z_2]} L(z)(\ol{z}dz+zd\ol{z})\right|\leq 2r^2\Lambda_2|z_2-z_1|.\eea
In view of (\ref{i4}), (\ref{a1}), (\ref{F01}) and (\ref{F02}), we have
$\left|F(z_2)-F(z_1)\right| \geq |z_2-z_1|g_1(r)$,
where
\beas g_1(r)=\Lambda_1\frac{1-\Lambda_1 r}{\Lambda_1-r}-3r^2\Lambda_2.\eeas
It is evident that
\beas g_1'(r)=\Lambda_1\frac{1-\Lambda_1^2}{(\Lambda_1-r)^2}-6r\Lambda_2 < 0\;\;\text{for}\;\; r\in(0,1).\eeas
Hence, $g_1(r)$ is a strictly decreasing function of $r$ on $[0,1]$. Note that $g_1(0)=1>0$ and $g_1(1/\Lambda_1)=-3\Lambda_2/\Lambda_1^2\leq 0$. Then there exists a unique $r_1\in (0,1/\Lambda_1]$ such that $g_1(r_1)=0$. Thus, we have $|F(z_2)-F(z_1)|>0$, {\it i.e.}, $F(z_1)\neq F(z_2)$ in $\D_{r_1}$, and therefore $F(z)$ is univalent in $\D_{r_1}$.\\[2mm]
\indent Now we prove that $F(\D_{r_1})$ contains a schlicht disk $\D_{\rho_1}$. Note that $F(0)=0$, and for $z'\in\pa\D_{r_1}$ with $w'=F(z')\in F(\pa\D_{r_1})$ and $|w'|=\min{\{|w|:w\in F(\pa\D_{r_1})\}}$, let $\gamma=F^{-1}(\ol{ow'})$. In view of \textrm{Lemma} $\ref{l8}$ and using the inequality $(\ref{eq4.3.1})$, we have
\beas |F(z')|=||z'|^2L(z')+K(z')|&\geq& |K(z')|-r_1^2|L(z')|\\[2mm]
&= & \left|\int_{\gamma}K_{\zeta}(\zeta)d\zeta+K_{\ol{\zeta}}(\zeta)d\ol{\zeta}\right|-r_1^2|L(z')|\\[2mm]
&\geq & \Lambda_1 \int_0^{r_1}\frac{\frac{1}{\Lambda_1}-t}{1-\frac{t}{\Lambda_1}}dt-r_1^3\Lambda_2\\[2mm]
&=& \Lambda_1^2 r_1+(\Lambda_1^3-\Lambda_1)\ln\left(1-\frac{r_1}{\Lambda_1}\right)-r_1^3\Lambda_2=\rho_1.\eeas
\indent Now to prove the sharpness of $r_1$ and $\rho_1$, we consider the function $F_0(z)$ given by (\ref{ex1}) with $L(z)=\Lambda_2 z$ and $K(z)=\Lambda_1^2z+(\Lambda_1^3-\Lambda_1)\ln \left(1-\frac{z}{\Lambda_1}\right)$. It is easy to verify that $F_0(z)$ satisfies all the conditions of \textrm{Theorem} \ref{th2}. Therefore $F_0(z)$ is univalent in $\D_{r_1}$ and $F_0(\D_{r_1})\supseteq \D_{\rho_1}$.\\[2mm]
\indent To show that the radius $r_1$ is sharp, we need to prove that $F_0(z)$ is not univalent in $\D_r$ for each $r\in(r_1,1]$. In fact consider the real differential function
\beas h_0(x)=\Lambda_1^2x+(\Lambda_1^3-\Lambda_1)\ln \left(1-\frac{x}{\Lambda_1}\right)-\Lambda_2 x^3,\;\;x\in[0,1].\eeas
Note that the continuous function 
\beas h_0'(x)=\Lambda_1\frac{1-\Lambda_1 x}{\Lambda_1-x}-3x^2\Lambda_2=g_0(x)\eeas
is strictly decreasing function on $[0,1]$ and $g_0(r_1)=0$. Hence $h_0(x)$ is strictly increasing in $[0,r_1)$ and strictly decreasing in $(r_1,1]$. Then there exists $z_1=x_1$ and $z_2=x_2$, where $z_1,z_2\in\D_r\;(r\in(r_1,1])$ with $z_1\neq z_2$ such that
\beas F_0(z_1)=F_0(x_1)=h_0(x_1)=h_0(x_2)=F_0(x_2)=F_0(z_2) ,\eeas
which shows that $F_0(z)$ is not univalent in $\D_r$ for each $r\in(r_1,1]$. Thus, the univalence radius $r_1$ is sharp.\\[2mm]
\indent Finally, we show the sharpness of $\rho_1$. Note that $F_0(0)=0$, and for $z=r_1\in\partial \D_{r_1}$, we have
$|F_0(z)-F_0(0)|=|F_0(r_1)|=F_0(r_1)=\rho_1$.
This completes the proof.
\end{proof}
In \textrm{Lemma} $\ref{l4}$, for a harmonic mapping $K(z)$ with $K(0)=0,\lambda_K(0)=1$ and $\Lambda_K(z)\leq \Lambda_1$, we have $\Lambda_1\geq 1$. Note that \textrm{Theorem} \ref{th2} deals with the case $\Lambda_1>1$. Therefore, we deal with the case $\Lambda_1=1$ in the following result.
\begin{theo}\label{th21}
Let $F(z)=r^2L(z)+K(z)$ be in $L_{Lh}(\D)$, where $L(z)$ is logharmonic and $K(z)$ is harmonic in $\D$ such that $F(0)=\lambda_F(0)-1=0$
with $\Lambda_K\leq 1$ and $\Lambda_L \leq \Lambda_2 $. Then there exists a constant $0 < r_1' < 1 $ such that $F$ is univalent in the disk $|z|<r_1'$, where
\bea\label{r*} r_1'=\left\{\begin{array}{lll}1, &\text{if}\;\;3\Lambda_2\leq 1, \\\dfrac{1}{\sqrt{3\Lambda_2}}, &\text{if}\;\;3\Lambda_2> 1\end{array}\right.\eea
and $F(\D_{r_1'})$ contains a schlicht disk $\D_{\rho_1'}$, with
$ \rho_1'=r_1'-r_1'^3\Lambda_2$.
Moreover, these estimates are sharp with an extremal function given by
\bea\label{ex2} F_1(z)&=& z-\Lambda_2 z^2\ol{z}, \;\;z\in\D.\eea
\end{theo}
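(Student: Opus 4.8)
The plan is to reuse the architecture of the proof of Theorem \ref{th2}, exploiting the fact that the hypothesis $\Lambda_K\le 1$ is far more rigid than $\Lambda_K<\Lambda_1$ with $\Lambda_1>1$. First I would pin down the structure of $K$. From $F(0)=0$ together with $F_z(0)=K_z(0)$ and $F_{\ol z}(0)=K_{\ol z}(0)$ one gets $K(0)=0$ and $\lambda_K(0)=\lambda_F(0)=1$. Since always $\Lambda_K(0)\ge\lambda_K(0)=1$ while $\Lambda_K\le 1$, the case $\Lambda=1$ of \textrm{Lemma} \ref{l4} applies and forces $K(z)=a_1z+\ol{b_1z}$ with $\big||a_1|-|b_1|\big|=1$. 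Combined with $|a_1|+|b_1|=\Lambda_K\le 1$ and the trivial inequality $|a_1|+|b_1|\ge\big||a_1|-|b_1|\big|$, this yields $|a_1|+|b_1|=1$ and $\min\{|a_1|,|b_1|\}=0$. Hence $K$ is the affine map $K(z)=a_1z$ with $|a_1|=1$, or $K(z)=\ol{b_1z}$ with $|b_1|=1$.

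Next I would run the same triangle-inequality splitting as in Theorem \ref{th2}: for $z_1,z_2\in\D_r$ with $z_1\ne z_2$, write $|F(z_2)-F(z_1)|\ge I_1-I_2-I_3$ with $I_1,I_2,I_3$ defined exactly as there. The simplification is that $K_z$ and $K_{\ol z}$ are now \emph{constants}, with exactly one of modulus $1$ and the other $0$, so the integral defining $I_1$ evaluates exactly to $I_1=|z_2-z_1|$, replacing the \textrm{Lemma} \ref{l5} lower bound used when $\Lambda_1>1$. The estimates $I_2\le r^2\Lambda_2|z_2-z_1|$ and $I_3\le 2r^2\Lambda_2|z_2-z_1|$ carry over verbatim, using $\Lambda_L\le\Lambda_2$ and the bound $|L(z)|\le\Lambda_2|z|$ from \eqref{eq4.3.1}. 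This gives $|F(z_2)-F(z_1)|\ge(1-3r^2\Lambda_2)|z_2-z_1|$, and requiring $1-3r^2\Lambda_2>0$ produces the two cases of \eqref{r*}: when $3\Lambda_2\le 1$ the factor stays positive for every $r<1$, so $r_1'=1$; when $3\Lambda_2>1$ it vanishes precisely at $r=1/\sqrt{3\Lambda_2}$.

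For the schlicht disk I would take $z'\in\pa\D_{r_1'}$ minimizing $|F|$ on the boundary and use $|F(z')|\ge|K(z')|-r_1'^{\,2}|L(z')|$. Since $K$ is modulus-preserving affine, $|K(z')|=|z'|=r_1'$, while $|L(z')|\le\Lambda_2 r_1'$ again by \eqref{eq4.3.1}, giving $|F(z')|\ge r_1'-r_1'^{\,3}\Lambda_2=\rho_1'$. Sharpness is then verified on the extremal $F_1(z)=z-\Lambda_2 z^2\ol z$ of \eqref{ex2}, which has $K(z)=z$ and $L(z)=-\Lambda_2 z$, so that all hypotheses of Theorem \ref{th21} hold with $\Lambda_K=1$ and $\Lambda_L=\Lambda_2$. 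Restricting to the real axis, $h_1(x)=x-\Lambda_2 x^3$ has derivative $1-3\Lambda_2 x^2$; when $3\Lambda_2>1$ this changes sign at $r_1'$, so $h_1$, hence $F_1$, fails to be injective on any $\D_r$ with $r>r_1'$, while $F_1(r_1')=\rho_1'$ shows the covering radius is attained; when $3\Lambda_2\le 1$ the value $r_1'=1$ is best possible since $\D$ is the whole domain.

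I do not anticipate a genuine obstacle. The entire difficulty of Theorem \ref{th2} lay in controlling the nonlinear harmonic part $K$ via \textrm{Lemmas} \ref{l5} and \ref{l8}, and this difficulty disappears once $K$ is forced to be affine. The single point requiring care is the algebraic deduction that $\Lambda_K\le 1$ together with $\lambda_K(0)=1$ collapses $K$ to one linear monomial; every subsequent estimate is simply the $\Lambda_1\to 1$ degeneration of the bounds established in Theorem \ref{th2}.
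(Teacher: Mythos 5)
Your proposal is correct and follows essentially the same route as the paper: invoke the $\Lambda=1$ case of \textrm{Lemma} \ref{l4} to reduce $K$ to $a_1z+\ol{b_1z}$ with $||a_1|-|b_1||=1$, reuse the $I_1-I_2-I_3$ splitting and the bounds (\ref{F01}), (\ref{F02}) from \textrm{Theorem} \ref{th2} to get the factor $1-3r^2\Lambda_2$, split into the two cases of (\ref{r*}), and verify the covering radius and sharpness on $F_1$. Your extra observation that $\Lambda_K\le 1$ forces one of $a_1,b_1$ to vanish is a harmless refinement the paper does not need, since $|K(z)|\ge ||a_1|-|b_1||\,|z|$ already suffices.
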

\begin{proof}
In view of \textrm{Lemma} $\ref{l4}$, we have
$ K(z)=a_1z+\ol{b_1z}$
with $||a_1|-|b_1||=1$. Using arguments similar to those used in the proof of \textrm{Theorem} \ref{th2}, we obtain (\ref{i4}), where
\bea\label{F03} I_1=\left|\int_{[z_1,z_2]} K_z(z)dz+K_{\ol{z}}(z)d{\ol{z}}\right|\geq ||a_1|-|b_1|||z_2-z_1|=|z_2-z_1|.\eea
In view of (\ref{i4}), (\ref{F01}), (\ref{F02}) and (\ref{F03}), we have
$ \left|F(z_2)-F(z_1)\right| \geq |z_2-z_1|g_2(r)$,
where $g_2(r)=1-3r^2\Lambda_2$ is a strictly decreasing function of $r$ on $[0,1]$. To prove the univalence of $F$ on the disk $\D_{r_1'}$, where $r_1'$ is given by (\ref{r*}), we consider the following cases.\\
\textbf{Case 1.} If $3\Lambda_2\leq 1$, then it is evident that
$ \left|F(z_2)-F(z_1)\right|>0$ for $r\in(0,1)$. Therefore, $F$ is univalent on $\D$.\\
{\bf Case 2.} If $3\Lambda_2> 1$, then there exists a unique $r_*\in(0,1)$ such that $g_2(r_*)=0$. Hence, $F(z)$ is univalent in $\D_{r_*}$.\\[2mm]
\indent Now we prove that $F(\D_{r_1'})$ contains a schlicht disk $\D_{\rho_1'}$. Note that $F(0)=0$ and in view of $(\ref{eq4.3.1})$, for $z=r_1'e^{i\theta}\in\pa\D_{r_1'}$, we have
\beas |F(z)|=||z|^2L(z)+K(z)|
&\geq &|K(z)|-r_1'^2|L(z)|\\[2mm]
&\geq & r_1'||a_1|-|b_1||-r_1'^3\Lambda_2\\[2mm]
&=&r_1'-r_1'^3\Lambda_2=\rho_1'.\eeas
\indent A similar approach as \textrm{Theorem} \ref{th2} shows the sharpness of $r_1'$ and $\rho_1'$ with an extremal function given by (\ref{ex2}). This completes the proof.
\end{proof} 
In the next two results, we establish the Landau-type theorems for functions of the form $D(F)$, where $F$ belonging to the class $L_{Lh}(\D)$ with bounded length distortions.
\begin{theo}\label{th4}
Let $F(z)=r^2L(z)+K(z)$ be in $L_{Lh}(\D)$, where $L(z)$ is logharmonic and $K(z)$ is harmonic in $\D$ such that $F(0)=\lambda_F(0)-1=0$ with $\Lambda_K\leq \Lambda_1$ $(\Lambda_1>1)$, $ \Lambda_{L_z} \leq \Lambda_2 $ and $\Lambda_{L_{\ol{z}}} \leq \Lambda_3$. Then $D(F)$ is univalent in $|z|<r_2$, where $r_2\in (0,1)$ satisfies the equation
\beas &&1-3(\Lambda_2+\Lambda_3 )r^3 -2(\Lambda_2+\Lambda_3)\frac{r^3}{1-r}-  \frac{\Lambda_1^2-1}{\Lambda_1}\frac{r(2-r)}{(1-r)^2}=0.\eeas
Also, $D(F)$ contains a schlicht disk $\D_{\rho_2}$ with
\beas \rho_2= r_2- \frac{\Lambda_1^2-1}{\Lambda_1}\frac{r_2^2}{1-r_2}-(\Lambda_2+\Lambda_3)r_2^4. \eeas
\end{theo}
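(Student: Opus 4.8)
The plan is to follow the same line-integral scheme used for Theorem~\ref{th2}, after first rewriting $D(F)$ in a workable form. Since $D$ is a derivation and $D(|z|^2)=z\ol{z}-\ol{z}z=0$, I would record
\[
D(F)=|z|^2D(L)+D(K),\qquad D(L)=zL_z-\ol{z}L_{\ol{z}},\quad D(K)=zh_2'(z)-\ol{zg_2'(z)} .
\]
Thus $D(K)$ is again harmonic, while $|z|^2D(L)=z^2\ol{z}\,L_z-z\ol{z}^2\,L_{\ol{z}}$ vanishes to third order at the origin; consequently $(D(F))_z(0)$ and $(D(F))_{\ol{z}}(0)$ come only from $D(K)$, and one checks $\lambda_{D(F)}(0)=\lambda_K(0)=\lambda_F(0)=1$.

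For univalence, fix $z_1,z_2\in\D_r$, $z_1\ne z_2$, and write $D(F)(z_2)-D(F)(z_1)=\int_{[z_1,z_2]}(D(F))_z\,dz+(D(F))_{\ol{z}}\,d\ol{z}$. I would separate each integrand into its value at the origin plus a remainder; by the reverse triangle inequality the constant part contributes at least $\lambda_{D(F)}(0)|z_2-z_1|=|z_2-z_1|$. The harmonic remainder is controlled by Lemma~\ref{l4}: writing $K=\sum A_nz^n+\ol{\sum B_nz^n}$ gives $|A_n|+|B_n|\le(\Lambda_1^2-1)/(n\Lambda_1)$, and $\sum_{n\ge2}n^2(|A_n|+|B_n|)r^{n-1}$ sums to $\frac{\Lambda_1^2-1}{\Lambda_1}\frac{r(2-r)}{(1-r)^2}$. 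For the logharmonic remainder I would differentiate to obtain
\[
(|z|^2D(L))_z=2z\ol{z}\,L_z-\ol{z}^2L_{\ol{z}}+z^2\ol{z}\,L_{zz}-z\ol{z}^2L_{z\ol{z}},
\]
\[
(|z|^2D(L))_{\ol{z}}=z^2L_z-2z\ol{z}\,L_{\ol{z}}+z^2\ol{z}\,L_{z\ol{z}}-z\ol{z}^2L_{\ol{z}\ol{z}},
\]
and estimate the first- and second-order terms separately. For the first-order terms I use $|L_z(z)|\le\Lambda_2|z|$ and $|L_{\ol{z}}(z)|\le\Lambda_3|z|$, obtained by integrating the hypotheses $\Lambda_{L_z}\le\Lambda_2$ and $\Lambda_{L_{\ol{z}}}\le\Lambda_3$ from the origin; this produces the contribution $3(\Lambda_2+\Lambda_3)r^3$. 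For the second-order terms I invoke Lemmas~\ref{l12} and \ref{l13}: their coefficient bounds, summed as geometric series, give $|L_{zz}|,|L_{z\ol{z}}|\le\Lambda_2/(1-r)$ and $|L_{z\ol{z}}|,|L_{\ol{z}\ol{z}}|\le\Lambda_3/(1-r)$, and bounding $L_{zz}$ and one copy of $L_{z\ol{z}}$ by $\Lambda_2/(1-r)$ while bounding $L_{\ol{z}\ol{z}}$ and the second copy of $L_{z\ol{z}}$ by $\Lambda_3/(1-r)$ yields $2(\Lambda_2+\Lambda_3)\frac{r^3}{1-r}$. Collecting the three contributions gives $|D(F)(z_2)-D(F)(z_1)|\ge g(r)|z_2-z_1|$, where $g(r)$ is the left-hand side of the stated equation. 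Since $g$ is strictly decreasing on $(0,1)$ with $g(0)=1>0$ and $g(r)\to-\infty$ as $r\to1^-$, there is a unique root $r_2$, and $D(F)$ is univalent in $\D_{r_2}$.

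For the covering radius, note $D(F)(0)=0$; choosing $z'\in\partial\D_{r_2}$ with $|D(F)(z')|=\min\{|D(F)(z)|:z\in\partial\D_{r_2}\}$, I would bound $|D(F)(z')|$ below by the same splitting. The linear part contributes at least $r_2$, the higher-order harmonic part is at most $\frac{\Lambda_1^2-1}{\Lambda_1}\frac{r_2^2}{1-r_2}$ by Lemma~\ref{l4}, and the logharmonic part satisfies $\bigl||z'|^2D(L)(z')\bigr|\le r_2^3\bigl(|L_z(z')|+|L_{\ol{z}}(z')|\bigr)\le(\Lambda_2+\Lambda_3)r_2^4$. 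Hence $|D(F)(z')|\ge\rho_2$, and the standard minimum-modulus argument (as in Theorem~\ref{th2}) shows $D(F)(\D_{r_2})\supseteq\D_{\rho_2}$.

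The main obstacle I expect is the precise accounting of the logharmonic block. Two points need care: first, the first-order terms must be treated with $L_z,L_{\ol{z}}$ vanishing at the origin so that they contribute $3(\Lambda_2+\Lambda_3)r^3$ rather than a spurious $O(r^2)$ term; second, the constant $2$ in $2(\Lambda_2+\Lambda_3)\frac{r^3}{1-r}$ appears only after splitting the single mixed derivative $L_{z\ol{z}}$ into one copy estimated through Lemma~\ref{l12} and one through Lemma~\ref{l13}. By comparison, the harmonic estimates, the monotonicity of $g$, and the covering argument are routine extensions of the corresponding steps in Theorem~\ref{th2}.
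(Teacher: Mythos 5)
Your proposal is correct and follows essentially the same route as the paper: the same decomposition of $D(F)$ into the harmonic block $D(K)$ and the logharmonic block $|z|^2D(L)$, the same first-order bounds $|L_z|\le\Lambda_2|z|$, $|L_{\ol{z}}|\le\Lambda_3|z|$ giving $3(\Lambda_2+\Lambda_3)r^3$, the same use of Lemmas~\ref{l12} and \ref{l13} for the second-order terms, and Lemma~\ref{l4} for the harmonic remainder (the paper splits that remainder into two integrals $I_5$, $I_6$ whose bounds sum to your $\frac{\Lambda_1^2-1}{\Lambda_1}\frac{r(2-r)}{(1-r)^2}$, a purely cosmetic difference). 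The covering argument also matches the paper's.
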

\begin{proof}
Let $L(z)$ and $K(z)$ be of the form
\beas L(z)=\left(z+\sum_{n=2}^\infty a_nz^n\right)\left(\ol{1+\sum_{n=1}^\infty b_nz^n}\right) \;\;\text{and}\;\;K(z)=\sum_{n=1}^\infty c_nz^n +\ol{\sum_{n=1}^\infty d_nz^n}.\eeas
Using the differential operator $D$, we have
\beas H(z)&=&D(F)(z)=zF_z(z)-\ol{z}F_{\ol{z}}(z)\\[2mm]
&=& z\left\{\ol{z}L(z)+|z|^2L_z(z)+K_z(z)\right\}-\ol{z}\left\{zL(z)+|z|^2L_{\ol{z}}(z)+K_{\ol{z}}(z)\right\}\\[2mm]
&=& z|z|^2L_z(z)-\ol{z}|z|^2L_{\ol{z}}(z)+zK_z(z)-\ol{z}K_{\ol{z}}(z).\eeas
A simple calculation yields that
\beas H_z(z)=2z\ol{z}L_z(z)+z^2\ol{z}L_{zz}(z)-{\ol{z}}^2L_{\ol{z}}(z)-z{\ol{z}}^2L_{z\ol{z}}(z)+K_z(z)+zK_{zz}(z)-\ol{z}K_{z\ol{z}}(z)\eeas
and
\beas H_{\ol{z}}(z)=z^2L_z(z)+z^2\ol{z}L_{\ol{z}z}(z)-2\ol{z}zL_{\ol{z}}(z)-z{\ol{z}}^2L_{{\bar{z}}{\bar{z}}}(z)+zK_{\ol{z}z}(z)-K_{\ol{z}}(z)
-\ol{z}K_{\bar{z}\bar{z}}(z).\eeas
Therefore,
\bea\label{H01} H_{z}(z)dz+H_{\ol{z}}(z)d\ol{z}
&=& K_z(0)dz-K_{\ol{z}}(0)d{\ol{z}}+2z\ol{z}L_z(z)dz-2\ol{z}zL_{\ol{z}}(z)d{\ol{z}}-{\ol{z}}^2L_{\ol{z}}(z)dz\nonumber\\[2mm]
&&+z^2L_z(z)d{\ol{z}}+z^2\ol{z}L_{zz}(z)dz+z^2\ol{z}L_{\ol{z}z}(z)d{\ol{z}}-z{\ol{z}}^2L_{z\ol{z}}(z)dz\nonumber\\[2mm]
&&-z{\ol{z}}^2L_{{\bar{z}}{\bar{z}}}(z)d{\ol{z}}+(K_z(z)-K_z(0))dz-(K_{\ol{z}}(z)-K_{\ol{z}}(0))d{\ol{z}}\nonumber\\[2mm]
&&+zK_{zz}(z)dz+zK_{\ol{z}z}(z)d{\ol{z}}-\ol{z}K_{z\ol{z}}(z)dz-\ol{z}K_{\bar{z}\bar{z}}(z)d{\ol{z}}.\eea
Since $K(z)$ is harmonic, from (\ref{H01}) it follows that
\bea\label{i7} |H(z_2)-H(z_1)|=\left|\int_{[z_1,z_2]}H_{z}(z)dz+H_{\ol{z}}(z)d\ol{z} \right|\geq  I_1-I_2-I_3-I_4-I_5-I_6,\eea
where
\beas &&I_1 = \left|\int_{[z_1,z_2]} K_z(0)dz-K_{\ol{z}}(0)d\ol{z} \right|,\;
I_2=\left|\int_{[z_1,z_2]}2z\ol{z}L_z(z)dz-2\ol{z}zL_{\ol{z}}(z)d{\ol{z}}\right|,\\[2mm]
&&I_3 = \left|\int_{[z_1,z_2]} -{\ol{z}}^2L_{\ol{z}}(z)dz+z^2L_z(z)d{\ol{z}}\right|,\\[2mm]
&&I_4= \left|\int_{[z_1,z_2]} z^2\ol{z}L_{zz}(z)dz+z^2\ol{z}L_{\ol{z}z}(z)d{\ol{z}} -z{\ol{z}}^2L_{z\ol{z}}(z)dz
-z{\ol{z}}^2L_{{\bar{z}}{\bar{z}}}(z)d{\ol{z}}\right|,\\[2mm]
&&I_5 = \left|\int_{[z_1,z_2]} (K_z(z)-K_z(0))dz-(K_{\ol{z}}(z)-K_{\ol{z}}(0))d\ol{z}\right|\\\text{and}
&&I_6 =  \left|\int_{[z_1,z_2]} zK_{zz}(z)dz-\ol{z}K_{\bar{z}\bar{z}}(z)d{\ol{z}}\right|.\eeas
Since $\lambda_K(0)=\lambda_F(0)=1$, we have
\beas I_1 &=& \left|\int_{[z_1,z_2]} K_z(0)dz-K_{\ol{z}}(0)d\ol{z} \right|\geq \int_{[z_1,z_2]} \lambda_K(0)|dz|=\lambda_K(0)|z_2-z_1|=|z_2-z_1|.\eeas
Now
\bea\label{L10} |L_z(z)|&=&\left|\int_{[0,z]}{L_{zz}(z)dz+L_{\ol{z}z}(z)d\ol{z}}\right|\leq \int_{[0,z]}{(|L_{zz}(z)|+|L_{\ol{z}z}(z)|)}|dz|\leq \Lambda_2|z|\nonumber\\\eea
and
\bea\label{L11} |L_{\ol{z}}(z)|&=&\left|\int_{[0,z]}{L_{z{\ol{z}}}(z)dz+L_{\bar{z}\bar{z}}(z)d\ol{z}}\right|\leq \int_{[0,z]}{(|L_{z{\ol{z}}}(z)|+|L_{\bar{z}\bar{z}}(z)|)}|dz|\leq \Lambda_3|z|.\nonumber\\\eea
In view of (\ref{L10}) and (\ref{L11}), we have
\bea\label{R1} I_2=\left|\int_{[z_1,z_2]}2z\ol{z}L_z(z)dz-2\ol{z}zL_{\ol{z}}(z)d{\ol{z}}\right|
&\leq & 2r^2 \int_{[z_1,z_2]}\left(|L_z(z)|+ |L_{\ol{z}}(z)|\right)|dz|\nonumber\\[2mm]
&\leq & 2r^3 |z_2-z_1|(\Lambda_2+\Lambda_3 ),\\
\label{R2} I_3 = \left|\int_{[z_1,z_2]} -{\ol{z}}^2L_{\ol{z}}(z)dz+z^2L_z(z)d{\ol{z}}\right|
&\leq & r^2\int_{[z_1,z_2]} \left(|L_{\ol{z}}(z)|+|L_{z}(z)|\right)|dz|\nonumber\\[2mm]
& \leq & r^3|z_2-z_1| (\Lambda_2 +\Lambda_3).\eea
In view of \textrm{Lemmas} $\ref{l12}$ and $\ref{l13}$, we have
\bea\label{R3} I_4&=& \left|\int_{[z_1,z_2]} z^2\ol{z}L_{zz}(z)dz+z^2\ol{z}L_{\ol{z}z}(z)d{\ol{z}} -z{\ol{z}}^2L_{z\ol{z}}(z)dz
-z{\ol{z}}^2L_{{\bar{z}}{\bar{z}}}(z)d{\ol{z}}\right|\nonumber\\[2mm]
&\leq & r^3\int_{[z_1,z_2]} \left|L_{zz}(z)+L_{z\ol{z}}(z)+L_{\ol{z}z}(z)+L_{{\bar{z}}{\bar{z}}}(z)\right||dz|\nonumber\\[2mm]
&\leq & r^3  \int_{[z_1,z_2]}\sum_{n=0}^\infty \left|\sum_{j=0}^n (j+1)(j+2)a_{j+2}b_{n-j}\right|r^n|dz|\nonumber\\[2mm]
&&+r^3  \int_{[z_1,z_2]}\sum_{n=0}^\infty \left|\sum_{j=0}^n (j+1)(n-j+1)a_{j+1}b_{n-j+1} \right|r^n|dz|\nonumber\\[2mm]
&&+r^3  \int_{[z_1,z_2]}\sum_{n=0}^\infty  \left|\sum_{j=0}^n (j+1)(n-j+1)a_{j+1}b_{n-j+1} \right|r^n|dz|\nonumber\\[2mm]
&& +r^3  \int_{[z_1,z_2]}\sum_{n=0}^\infty \left|\sum_{j=0}^n (n-j+1)(n-j+2)a_j b_{n-j+2}\right| r^n|dz|\nonumber\\[2mm]
&\leq & 2|z_2-z_1|(\Lambda_2+\Lambda_3)\frac{r^3}{1-r}.
\eea
Now, in view of \textrm{Lemma} $\ref{l4}$, we have
\beas I_5 &=& \left|\int_{[z_1,z_2]} (K_z(z)-K_z(0))dz-(K_{\ol{z}}(z)-K_{\ol{z}}(0))d\ol{z}\right|\nonumber\\[2mm]
&=& \left|\int_{[z_1,z_2]} \left(\sum_{n=2}^\infty nc_nz^{n-1}\right)dz-\left(\ol{\sum_{n=2}^\infty nd_nz^{n-1}}\right)d\ol{z}\right|\nonumber\eeas
\beas&\leq & \int_{[z_1,z_2]} \left| \sum_{n=2}^\infty n(c_n+d_n)r^{n-1}\right||dz|\\[2mm]
&\leq & |z_2-z_1|  \frac{\Lambda_1^2-1}{\Lambda_1}\sum_{n=2}^\infty r^{n-1}=  |z_2-z_1|  \frac{\Lambda_1^2-1}{\Lambda_1}\frac{r}{1-r}\quad\text{and}\\[2mm]
 I_6 &= & \left|\int_{[z_1,z_2]} zK_{zz}(z)dz-\ol{z}K_{\bar{z}\bar{z}}(z)d{\ol{z}}\right|\\[2mm]
&= & \left|\int_{[z_1,z_2]}z\sum_{n=2}^\infty n(n-1)c_nz^{n-2}dz-\ol{z}\ol{\sum_{n=2}^\infty n(n-1)d_nz^{n-2}}d{\ol{z}}\right|\\[2mm]
&\leq & r \int_{[z_1,z_2]}\left|\sum_{n=2}^\infty n(n-1)\left(c_n+d_n\right)r^{n-2}\right||dz|\\[2mm]
 &\leq & r |z_2-z_1| \frac{\Lambda_1^2-1}{\Lambda_1}\sum_{n=2}^\infty (n-1)r^{n-2}= |z_2-z_1| \frac{\Lambda_1^2-1}{\Lambda_1}\frac{r}{(1-r)^2}.
\eeas
Thus, using the inequalities for $I_1,I_2,\dots,I_6$ in (\ref{i7}), we have
\beas  |H(z_2)-H(z_1)|\geq |z_2-z_1|g_3(r),\eeas
where
\beas g_3(r)=1-3(\Lambda_2+\Lambda_3 )r^3-2(\Lambda_2+\Lambda_3)\frac{r^3}{1-r}-  \frac{\Lambda_1^2-1}{\Lambda_1}\frac{r(2-r)}{(1-r)^2},\eeas
which implies that
\beas g_3'(r)=-9r^2 (\Lambda_2+\Lambda_3 )-2(\Lambda_2+\Lambda_3)\frac{r^2(3-2r)}{(1-r)^2}-\frac{\Lambda_1^2-1}{\Lambda_1}\frac{2}{(1-r)^3} < 0\eeas
for $r\in (0,1)$. Hence, $g_3(r)$ is a strictly decreasing function of $r$ on $[0,1]$. Also,
$\lim_{r\to 0^+}g_3(r)=1$ and $\lim_{r\to 1^-}g_3(r)=-\infty$.
Consequently, there exists a unique $r_2\in(0,1)$ such that $g_3(r_2)=0$. Therefore, $H$ is univalent in $\D_{r_2}$.\\[2mm]
\indent To prove the second part of the theorem, let $z=r_2e^{i\theta}\in\pa\D_{r_2}$. In view of  \textrm{Lemma} $\ref{l4}$, (\ref{L10}) and (\ref{L11}), we have
\beas |H(z)| &=& \left|z|z|^2L_z(z)-\ol{z}|z|^2L_{\ol{z}}(z)+zK_z(z)-\ol{z}K_{\ol{z}}(z) \right|\\[2mm]
&\geq & r_2|K_z(0)- K_{\ol{z}}(0)|-r_2\left|(K_z(z)-K_z(0))-(K_{\ol{z}}(z)-K_{\ol{z}}(0))\right|\\[2mm]
&&-r_2^3\left|L_z(z)-L_{\ol{z}}(z)\right|\\[2mm]
&\geq & r_2-r_2\left| \sum_{n=2}^\infty n(c_n+d_n)r_2^{n-1}\right|-r_2^3\left|L_z(z)-L_{\ol{z}}(z)\right|\\[2mm]
&\geq & r_2- \frac{\Lambda_1^2-1}{\Lambda_1}\frac{r_2^2}{1-r_2}-(\Lambda_2+\Lambda_3)r_2^4=\rho_2.
\eeas
This completes the proof.
\end{proof}
Note that \textrm{Theorem} \ref{th4} deals with the case $\Lambda_1>1$. Therefore, we deal with the case $\Lambda_1=1$ in the following result.
\begin{theo}\label{th41}
Let $F(z)=r^2L(z)+K(z)$ be in $L_{Lh}(\D)$, where $L(z)$ is logharmonic and $K(z)$ is harmonic in $\D$ such that $F(0)=\lambda_F(0)-1=0$ with $\Lambda_K\leq 1, \Lambda_{L_z} \leq \Lambda_2 $ and $\Lambda_{L_{\ol{z}}} \leq \Lambda_3$. Then $D(F)$ is univalent in $|z|<r_2'$, where $r_2'\in (0,1)$ satisfies the equation
\beas 1-3(\Lambda_2+\Lambda_3 )r^3 -2(\Lambda_2+\Lambda_3)\frac{r^3}{1-r}=0.\eeas
Also, $D(F)$ contains a schlicht disk $\D_{\rho_2'}$ with $\rho_2'= r_2'- (\Lambda_2+\Lambda_3)r_2'^4$. 
\end{theo}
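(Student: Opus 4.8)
The plan is to rerun the proof of Theorem~\ref{th4} almost verbatim, exploiting the rigidity forced by the borderline hypothesis $\Lambda_K\le 1$. First I would invoke Lemma~\ref{l4}. Since the $r^2L$ part of $F$ contributes nothing to the first-order derivatives of $F$ at the origin, we have $F_z(0)=K_z(0)$ and $F_{\ol z}(0)=K_{\ol z}(0)$, so the normalization $\lambda_F(0)=1$ propagates to $\lambda_K(0)=1$; together with $K(0)=0$ and $\Lambda_K\le 1$, the equality case of Lemma~\ref{l4} forces $K$ to be affine, namely $K(z)=a_1z+\ol{b_1z}$ with $\big||a_1|-|b_1|\big|=1$. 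In particular $K_z\equiv a_1$ and $K_{\ol z}\equiv\ol{b_1}$ are constant, and every second-order derivative of $K$ vanishes identically.

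Next I would form $H=D(F)=z|z|^2L_z-\ol z|z|^2L_{\ol z}+zK_z-\ol zK_{\ol z}$ and compute $H_z,H_{\ol z}$ exactly as in Theorem~\ref{th4}. The decisive simplification is that the affineness of $K$ makes the two integrals $I_5$ and $I_6$ of that proof---which collected the contributions of $K_z-K_z(0)$, $K_{\ol z}-K_{\ol z}(0)$, $K_{zz}$ and $K_{\ol z\,\ol z}$---vanish identically. The remaining estimate $|H(z_2)-H(z_1)|\ge I_1-I_2-I_3-I_4$ is then handled just as before: $I_1\ge|z_2-z_1|$ comes from $\lambda_K(0)=1$, while $I_2,I_3$ use $|L_z(z)|\le\Lambda_2|z|$ and $|L_{\ol z}(z)|\le\Lambda_3|z|$ (inequalities~(\ref{L10}) and~(\ref{L11})), and $I_4$ is controlled by the coefficient bounds of Lemmas~\ref{l12} and~\ref{l13}. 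This yields $|H(z_2)-H(z_1)|\ge|z_2-z_1|\,g(r)$ with
\[
g(r)=1-3(\Lambda_2+\Lambda_3)r^3-2(\Lambda_2+\Lambda_3)\frac{r^3}{1-r},
\]
which is precisely the function $g_3$ of Theorem~\ref{th4} with $\tfrac{\Lambda_1^2-1}{\Lambda_1}$ set to $0$.

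To conclude univalence I would note that a direct differentiation gives $g'(r)<0$ on $(0,1)$, with $\lim_{r\to0^+}g(r)=1>0$ and $\lim_{r\to1^-}g(r)=-\infty$; hence $g$ has a unique zero $r_2'\in(0,1)$ and $H=D(F)$ is injective on $\D_{r_2'}$. For the schlicht disk I would take $z=r_2'e^{i\theta}\in\partial\D_{r_2'}$ and estimate
\[
|H(z)|\ge r_2'\,\big|K_z(0)-e^{-2i\theta}K_{\ol z}(0)\big|-r_2'^3\big|L_z(z)-L_{\ol z}(z)\big|\ge r_2'\lambda_K(0)-(\Lambda_2+\Lambda_3)r_2'^4,
\]
using $\big|K_z(0)-e^{-2i\theta}K_{\ol z}(0)\big|\ge\lambda_K(0)=1$ and the fact that the affineness of $K$ leaves no intermediate oscillation term. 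This gives $\rho_2'=r_2'-(\Lambda_2+\Lambda_3)r_2'^4$, as claimed.

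There is no genuinely hard step here: the statement is the limiting analogue of Theorem~\ref{th4} at $\Lambda_1=1$. The only point requiring real care is the application of Lemma~\ref{l4}---specifically confirming that $\lambda_F(0)=1$ forces $\lambda_K(0)=1$ so that the equality case applies---since it is exactly this rigidity that both annihilates $I_5,I_6$ and furnishes the clean lower bound $\lambda_K(0)=1$ used in the schlicht-disk estimate.
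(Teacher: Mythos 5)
Your proposal is correct and follows essentially the same route as the paper: both reduce to the rigidity case of Lemma~\ref{l4} to get $K(z)=a_1z+\ol{b_1z}$ with $\bigl||a_1|-|b_1|\bigr|=1$, which eliminates the $I_5,I_6$ terms of Theorem~\ref{th4} and leaves exactly the bound $|H(z_2)-H(z_1)|\ge|z_2-z_1|\,g_4(r)$ with $g_4(r)=1-3(\Lambda_2+\Lambda_3)r^3-2(\Lambda_2+\Lambda_3)\frac{r^3}{1-r}$, followed by the same boundary estimate for the schlicht disk. No substantive differences.
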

\begin{proof}
In view of \textrm{Lemma} $\ref{l4}$, we have $K(z)=a_1z+\ol{b_1z}$ with $||a_1|-|b_1||=1$. Using arguments similar to those used in the proof of \textrm{Theorem} \ref{th4}, we have
\bea\label{R4} |H(z_2)-H(z_1)|=\left|\int_{[z_1,z_2]}H_{z}(z)dz+H_{\ol{z}}(z)d\ol{z} \right|\geq  I_1-I_2-I_3-I_4,\eea
where
\bea\label{R5} I_1 &=& \left|\int_{[z_1,z_2]} K_z(z)dz-K_{\ol{z}}(z)d\ol{z} \right|\geq ||a_1|-|b_1|||z_2-z_1|=|z_2-z_1|,\\[2mm]
I_2&=&\left|\int_{[z_1,z_2]}2z\ol{z}L_z(z)dz-2\ol{z}zL_{\ol{z}}(z)d{\ol{z}}\right|,\;
I_3 = \left|\int_{[z_1,z_2]} -{\ol{z}}^2L_{\ol{z}}(z)dz+z^2L_z(z)d{\ol{z}}\right|,\nonumber\\[2mm]
I_4&=& \left|\int_{[z_1,z_2]} z^2\ol{z}L_{zz}(z)dz+z^2\ol{z}L_{\ol{z}z}(z)d{\ol{z}} -z{\ol{z}}^2L_{z\ol{z}}(z)dz
-z{\ol{z}}^2L_{{\bar{z}}{\bar{z}}}(z)d{\ol{z}}\right|.\nonumber
\eea
Using the inequalities (\ref{R1}), (\ref{R2}), (\ref{R3}), (\ref{R4}) and (\ref{R5}), we have $|H(z_2)-H(z_1)|\geq |z_2-z_1|g_4(r)$, where
\beas g_4(r)=1-3(\Lambda_2+\Lambda_3 )r^3 -2(\Lambda_2+\Lambda_3)\frac{r^3}{1-r}.\eeas
Therefore, $H$ is univalent in $\D_{r_2'}$, where $r_2'$ satisfies $g_4(r_2')=0$.\\[2mm]
\indent Now we show that $H(z)$ contains a schlicht disk $\D_{\rho_2'}$. Using (\ref{L10}) and (\ref{L11}), for $z=r_2'e^{i\theta}\in\pa\D_{r_2'}$, we have
\beas |H(z)| &=& \left|z|z|^2L_z(z)-\ol{z}|z|^2L_{\ol{z}}(z)+zK_z(z)-\ol{z}K_{\ol{z}}(z) \right|\\[2mm]
&\geq & r_2'|K_z(z)- K_{\ol{z}}(z)|-r_2'^4(\Lambda_2+\Lambda_3)\\[2mm]
&\geq & r_2'||a_1|-|b_1||- r_2'^4(\Lambda_2+\Lambda_3)\\[2mm]
&=& r_2'- (\Lambda_2+\Lambda_3)r_2'^4=\rho_2'.
\eeas
This completes the proof.
\end{proof}
\section*{Declarations}
\noindent{\bf Acknowledgement:} The first Author is supported by University Grants Commission (IN) fellowship (NO. NBCFDC/CSIR-UGC-DECEMBER-2023).\\[2mm]
{\bf Conflict of Interest:} The authors declare that there are no conflicts of interest regarding the publication of this paper.\\[2mm]
{\bf Availability of data and materials:} Not applicable

\end{document}